\newtheorem{thm}[subsection]{Theorem}
\newtheorem{lem}[subsection]{Lemma}
\newtheorem{prop}[subsection]{Proposition}
\theoremstyle{definition}
\newtheorem*{example*}{Example}
\theoremstyle{remark}
\newtheorem{rmk}[subsection]{Remark}
\numberwithin{equation}{subsection}
\newcommand{\N}{{\mathbb N}}
\newcommand{\Z}{{\mathbb Z}}
\newcommand{\Q}{{\mathbb Q}}
\newcommand{\Hom}{\operatorname{Hom}}
\newcommand{\divided}[2]{#1^{(#2)}}
\newcommand{\sqbinom}[2]{\genfrac{[}{]}{0pt}{}{#1}{#2}}
\newcommand{\UU}{\mathbf{U}}
\newcommand{\Sch}{\mathbf{S}}
\renewcommand{\P}{\mathbf{P}}
\newcommand{\B}{\mathbf{B}}
\newcommand{\A}{\mathcal{A}}
\newcommand{\gl}{\mathfrak{gl}}
\newcommand{\catC}{\mathcal{C}}
\newcommand{\bil}[2]{\langle #1, #2 \rangle}
\renewcommand{\ge}{\geqslant}
\renewcommand{\le}{\leqslant}
\newcommand{\phat}{\widehat{p}}
\newcommand{\pdot}{\dot{p}}
\begin{document}
\title[quantized enveloping algebras via inverse limits]
{Constructing quantized enveloping algebras via inverse limits of
  finite dimensional algebras}

\author[S.~Doty]{Stephen Doty}
\address{Department of Mathematics and Statistics\\ 
Loyola University Chi\-cago\\ 
Chicago, Illinois 60626 USA}
\thanks{Partly supported by the 2007 Yip Fellowship at Magdalene College,
  Cambridge.}
  \email{doty@math.luc.edu}

\date{12 July 2008}

\begin{abstract} It is known that a generalized $q$-Schur algebra
  may be constructed as a quotient of a quantized enveloping algebra
  $\UU$ or its modified form $\dot{\UU}$.  On the other hand, we show
  here that both $\UU$ and $\dot{\UU}$ may be constructed within an
  inverse limit of a certain inverse system of generalized $q$-Schur
  algebras. Working within the inverse limit $\widehat{\UU}$ clarifies
  the relation between $\dot{\UU}$ and $\UU$. This inverse limit is a
  $q$-analogue of the linear dual $R[G]^*$ of the coordinate algebra
  of a corresponding linear algebraic group $G$.
\end{abstract}
\maketitle

\parskip=4pt 
\allowdisplaybreaks 

\section*{Introduction}\noindent 
Beilinson, Lusztig, and MacPherson \cite{BLM} constructed a quantized
enveloping algebra $\UU$ corresponding to the general linear Lie
algebra $\gl_n$ within the inverse limit of an inverse system
constructed from $q$-Schur algebras. The modified form $\dot{\UU}$ of
$\UU$ was also obtained within the inverse limit.  Using a slightly
different inverse system, consisting of all the generalized $q$-Schur
algebras connected to a given root datum, we construct both $\UU$ and
$\dot{\UU}$ as subalgebras of the resulting inverse limit.  This
approach, which is analogous to the inverse limit construction of
profinite groups, works uniformly for any root datum of finite type,
not just for type $A$. In particular, this clarifies the relation
between $\UU$ and $\dot{\UU}$. All of this is for the generic case,
i.e., working over the field $\Q(v)$ of rational functions in $v$, $v$
an indeterminate.  However, the construction is compatible with the
so-called ``restricted'' integral form of Lusztig, and (in a certain
sense made precise in \S5) is also compatible with specializations
defined in terms of the restricted integral form.

Generalized Schur algebras were introduced by Donkin \cite{Donkin:SA},
motivated by \cite{Green:book}. In \cite{PGSA} a uniform system of
generators and relations was found for them and their $q$-analogues
(this was known earlier \cite{DG:PSA} in type $A$) and it was proved
that the generalized $q$-Schur algebras are quasihereditary in any
specialization to a field. The generators and relations of \cite{PGSA}
allow a definition of the generalized $q$-Schur algebras independent
of the theory of quantized enveloping algebras; they also lead
directly to the inverse system considered here.

Similar inverse systems appeared in \cite{RM-Green} (in a more general
context) and in \cite{Liu} (in the classical case for types A--D). It
turns out that the inverse limit we construct is a ``procellular''
completion of $\dot{\UU}$, in the sense of \cite{RM-Green}. In
particular, its elements may be described as formal, possibly
infinite, linear combinations of the canonical basis of $\dot{\UU}$.

\section{Notation}\noindent
We fix our notational conventions, which are similar to those of
\cite{Lusztig}.

\subsection{Cartan datum} 
Let a Cartan datum be given. By definition, a Cartan datum consists of
a finite set $I$ and a symmetric bilinear form $(\ ,\ )$ on the free
abelian group $\Z[I]$ taking values in $\Z$, such that:

(a) $(i,i) \in \{2,4,6,\dots \}$ for any $i$ in $I$. 

(b) $2(i,j)/(i,i) \in \{0, -1, -2, \dots \}$ for any $i \ne j$ in $I$.

\subsection{Root datum}\label{root}
A root datum associated to the given Cartan datum consists of two
finitely generated free abelian groups $X$, $Y$ and a
perfect\footnote{A `perfect' pairing is one for which the natural maps
  $X \to \Hom_\Z(Y,\Z)$ (given by $x \to \bil{-}{x}$) and $Y
  \to \Hom_\Z(X,\Z)$ (given by $y \to \bil{y}{-}$) are
  isomorphisms.}  bilinear pairing $\bil{\ }{\ }: Y \times X \to \Z$
along with embeddings $I \to Y$ ($i \mapsto h_i$) and $I \to X$ ($i
\mapsto \alpha_i$) such that
\[
   \bil{h_i}{\alpha_j} = 2\frac{(i,j)}{(i,i)}
\]
for all $i,j$ in $I$.  The image of the embedding $I \to Y$ is the set
$\{h_i\}$ of simple coroots and the image of the embedding $I \to X$
is the set $\{\alpha_i\}$ of simple roots.

\subsection{}
The assumptions on the root datum imply that

(a) $\bil{h_i}{\alpha_i} = 2$ for all $i\in I$;

(b) $\bil{h_i}{\alpha_j} \in \{0, -1, -2, \dots \}$ for all $i \ne j
\in I$.

\noindent
In other words, the matrix $(\, \bil{h_i}{\alpha_j} \,)$ indexed by $I
\times I$ is a symmetrizable generalized Cartan matrix.

For each $i \in I$ we set $d_i = (i,i)/2$. Then the matrix $(\,d_i
\bil{h_i}{\alpha_j} \,)$ indexed by $I \times I$ is symmetric.

\subsection{}
Let $v$ be an indeterminate. Set $v_i = v^{d_i}$ for each $i \in
I$. More generally, given any rational function $P \in \Q(v)$ we let
$P_i$ denote the rational function obtained from $P$ by replacing $v$
by $v_i$.

Set $\A = \Z[v,v^{-1}]$. For $a\in \Z$, $t\in \N$ we set 
\[
   \sqbinom{a}{t} = \prod_{s=1}^t \frac{v^{a-s+1} - v^{-a+s-1}} {v^s -
   v^{-s}}.
\]
A priori this is an element of $\Q(v)$, but actually it lies in $\A$
(see \cite[\S1.3.1(d)]{Lusztig}). We set
\[
  [n] = \sqbinom{n}{1} = \frac{v^n - v^{-n}}{v - v^{-1}}
\qquad (n \in \Z)
\]
and 
\[
   [n]^! = [1]\cdots [n-1]\,[n] \qquad (n\in \N).
\]
Then it follows that
\[
\sqbinom{a}{t} = \frac{[a]^!}{[t]^!\,[a-t]^!}
\qquad \text{for all $0 \le t \le a$.}
\]

\subsection{} \label{terminology}
The Cartan datum is of {\em finite type} if the symmetric matrix $(\,
(i,j)\, )$ indexed by $I\times I$ is positive definite. This is
equivalent to the requirement that the Weyl group associated to the
Cartan datum is a finite group.

A root datum is $X$-{\em regular} (resp., $Y$-{\em regular}) if
$\{\alpha_i\}$ (resp., $\{h_i\}$) is linearly independent in $X$
(resp., $Y$). If the underlying Cartan datum is of finite type then
the root datum is automatically both $X$-regular and $Y$-regular.

In case a root datum is $X$-regular, there is a partial order on $X$
given by: $\lambda \le \lambda'$ if and only if $\lambda' - \lambda
\in \sum_i \N \alpha_i$.  In case a root datum is $Y$-regular, we
define
\[
X^+ = \{ \lambda \in X \mid \bil{h_i}{\lambda} \in \N, 
\text{ for all } i \in I \},
\]
the set of dominant weights.

\subsection{} \label{catC} 
Corresponding to a given root datum is a quantized enveloping algebra
$\UU$ over $\Q(v)$.  According to \cite[Corollary 33.1.5]{Lusztig},
the algebra $\UU$ is the associative algebra with 1 over $\Q(v)$ given
by the generators $E_i$, $E_{-i}$ ($i\in I$), $K_h$ ($h \in Y$)
subject to the defining relations

(a) $K_h K_{h'} = K_{h+h'}$; \quad $K_0 = 1$; 

(b) $K_h E_{\pm i} = v^{\pm\bil{h}{\alpha_i}} E_{\pm i} K_h$; 

(c) $E_i E_{-j} - E_{-j} E_i = \delta_{ij} \dfrac{\widetilde{K}_{i} -
  \widetilde{K}_{-i}}{v_i - v_i^{-1}}$ \quad where $\widetilde{K}_{\pm i}:=
K_{\pm d_ih_i}$;

(d) $\displaystyle \sum_{s+s'=1-\bil{h_i}{\alpha_j}} (-1)^{s'}
\sqbinom{1-\bil{h_i}{\alpha_j}}{s}_i E_{\pm i}^s E_{\pm j} E_{\pm
  i}^{s'} = 0$\quad ($i \ne j$)

\noindent
holding for any $i,j \in I$, and any $h, h' \in Y$. 

We define, for an element $E$, the quantized divided power
$\divided{E}{m}$ of $E$ by
\[
\divided{E}{m}:= \frac{E^m}{[m]^!_i}
\]
for any $m \in \N$. With this convention, one may rewrite relation (d)
in the equivalent form

($\mathrm{d}'$) $\displaystyle \sum_{s+s'=1-\bil{h_i}{\alpha_j}}
  (-1)^{s'} \divided{E_{\pm i}}{s} E_{\pm j} \divided{E_{\pm i}}{s'} =
  0$\quad ($i \ne j$).

As in \cite[\S3.4]{Lusztig}, let $\catC$ be the category whose
objects are $\UU$-modules $M$ admitting a weight space decomposition
$M = \oplus_{\lambda \in X} M_\lambda$ (as $\Q(v)$-vector spaces)
where the weight space $M_\lambda$ is given by
\[
  M_\lambda = \{ m\in M \mid K_h m = v^{\bil{h}{\lambda}} m, \text{
  all } h \in Y \}.
\]
The morphisms in $\catC$ are $\UU$-module homomorphisms. 

From now on we assume the root datum is of finite type. Thus it is
both $X$- and $Y$-regular.  We denote by $\Delta(\lambda)$ the simple
object (see \cite[Cor.~6.2.3, Prop.~3.5.6]{Lusztig}) of $\mathcal{C}$
of highest weight $\lambda \in X^+$, for any $\lambda \in X^+$.

\section{The algebra $\widehat{\UU}$}\noindent 
Fix a root datum $(X,\{\alpha_i\}, Y,\{h_i\})$ of {\em finite
  type}. We define the finite dimensional algebras $\Sch(\pi)$ (the
generalized $q$-Schur algebras) and construct the algebra
$\widehat{\UU}$ as an inverse limit.

\subsection{}
A nonempty subset $\pi$ of $X^+$ is {\em saturated} if $\lambda \le
\mu$ for $\lambda \in X^+$, $\mu \in \pi$ implies $\lambda \in \pi$.

Saturated subsets of $X^+$ exist in abundance. For instance, given any
$\mu \in X^+$, the set $X^+[\le \mu] = \{ \lambda \in X^+ \mid \lambda
\le \mu \}$ is saturated. In general, a saturated subset of $X^+$ is a
union of such subsets.

\subsection{The algebra $\Sch(\pi)$}\label{def:Spi}
Given a finite saturated set $\pi \subset X^+$ we define an algebra
$\Sch(\pi)$ to be the associative $\Q(v)$-algebra with 1 given by
the generators
\[
 E_i, E_{-i} \quad(i \in I),\qquad 1_\lambda \quad(\lambda \in W\pi)
\]
and the relations

(a) $1_\lambda 1_{\lambda'} = \delta_{\lambda,\lambda'} 1_\lambda$, \qquad
$\sum_{\lambda \in W\pi} 1_\lambda = 1$;

(b) $E_{\pm i} 1_\lambda = 
\begin{cases}
1_{\lambda \pm \alpha_i} E_{\pm i} & \text{if } \lambda \pm \alpha_i
\in W\pi \\ 0 & \text{otherwise};
\end{cases}$

($\mathrm{b}'$) $1_\lambda E_{\pm i} = 
\begin{cases}
E_{\pm i} 1_{\lambda \mp \alpha_i} & \text{if } \lambda \mp \alpha_i
\in W\pi \\ 0 & \text{otherwise}; 
\end{cases}$

(c) $E_iE_{-j} - E_{-j}E_i = \delta_{ij} \sum_{\lambda \in W\pi}
\;[\bil{h_i}{\lambda}]_i \, 1_\lambda$;

(d) $\displaystyle \sum_{s+s'=1-\bil{h_i}{\alpha_j}} (-1)^{s'}
\divided{E_{\pm i}}{s} E_{\pm j} \divided{E_{\pm i}}{s'} =
  0$\quad  ($i \ne j$)

\noindent
for all $i,j \in I$ and all $\lambda, \lambda' \in W\pi$. In relation
(d), $\divided{E_{\pm i}}{s}$ is the quantized divided power
$\divided{E_{\pm i}}{s} = E_{\pm i}^s/([s]_i^!)$.

The algebra $\Sch(\pi)$ is known as a generalized $q$-Schur algebra
(see \cite{PGSA}). It is a consequence of the defining relations that
the generators $E_{\pm i}$ are nilpotent elements of $\Sch(\pi)$; it
follows that $\Sch(\pi)$ is finite dimensional over $\Q(v)$.

For any $\pi$ we define elements $K_h \in \Sch(\pi) $ for each $h \in
Y$ by the formula
\[
K_h = \textstyle \sum_{\lambda \in W\pi} v^{\bil{h}{\lambda}}
1_\lambda .
\]
This depends on $\pi$ as well as $h$; we rely on the context to make
clear in which $\Sch(\pi)$ a given $K_h$ is to be interpreted.  We
note that the identities $K_h K_{h'} = K_{h+h'}$, $K_0 = 1$ and
$K_{-h} = K_h^{-1}$ hold in $\Sch(\pi)$ for all $h, h' \in Y$.

\subsection{} \label{Spi:reformulation}
It will be convenient for ease of notation to extend the meaning of
the symbols $1_\lambda$ to all $\lambda \in X$ by making the
convention $1_\lambda = 0$ in $\Sch(\pi)$ for any $\lambda \notin
W\pi$. With this convention $\Sch(\pi)$ becomes the associative
$\Q(v)$-algebra given by generators
\[
 E_{i}, E_{-i} \quad(i \in I),\qquad 1_\lambda \quad(\lambda \in W\pi)
\]
with the relations

(a) $1_\lambda 1_{\lambda'} = \delta_{\lambda,\lambda'} 1_\lambda$,
\qquad $\sum_{\lambda \in X} 1_\lambda = 1$;

(b) $E_{\pm i} 1_\lambda = 1_{\lambda \pm \alpha_i} E_{\pm i}$;

(c) $E_{i}E_{-j} - E_{-j} E_{i} = \delta_{ij} \sum_{\lambda \in X}
\;[\bil{h_i}{\lambda}]_i \, 1_\lambda$;

(d) $\displaystyle \sum_{s+s'=1-\bil{h_i}{\alpha_j}} (-1)^{s'}
\divided{E_{\pm i}}{s} E_{\pm j} \divided{E_{\pm i}}{s'} = 0$\quad 
($i \ne j$)

\noindent
for all $i,j \in I$ and all $\lambda, \lambda' \in X$. Note that the
sums in (a), (c) are finite since by definition all but finitely many
$1_\lambda$ are zero in $\Sch(\pi)$.

\subsection{}
The form of the presentation of $\Sch(\pi)$ given in
\ref{Spi:reformulation} makes it clear that for any finite saturated
subsets $\pi, \pi'$ of $X^+$ with $\pi \subset \pi'$ we have a
surjective algebra map
\[
f_{\pi,\pi'}: \Sch(\pi') \to \Sch(\pi)
\] 
sending $E_{\pm i} \to E_{\pm i}$, $1_\lambda \to 1_\lambda$ (any $i
\in I$, $\lambda \in W\pi'$). Since $f_{\pi,\pi} = 1$ and for any
finite saturated subsets $\pi, \pi', \pi''$ of $X^+$ with $\pi \subset
\pi' \subset \pi''$ we have $f_{\pi,\pi'} f_{\pi',\pi''} =
f_{\pi,\pi''}$, the collection
\[
  \{ \Sch(\pi) ; f_{\pi,\pi'} \}
\]
forms an inverse system of algebras. We denote by $\widehat{\UU} =
\varprojlim \Sch(\pi)$ the inverse limit of this inverse system,
taken over the collection of all finite saturated subsets of $X^+$.
This is isomorphic with 
\[
\{\textstyle (a_\pi)_\pi \in \prod_\pi \Sch(\pi) \mid a_\pi =
f_{\pi,\pi'}(a_{\pi'}), \text{ for any $\pi \subset \pi'$} \}
\]
with addition and multiplication of such sequences defined
componentwise.  We set 
\[
\widehat{1}_\lambda := (1_\lambda)_\pi  \quad \in \widehat{\UU}
\]
and note that because of the convention introduced in
\ref{Spi:reformulation} a number of the components of this sequence
may be zero. However, only finitely many components are zero, so the
sequence is eventually constant. We similarly set
\[
\widehat{E}_{\pm i} := (E_{\pm i})_\pi  \quad \in \widehat{\UU}
\]
for any $i \in I$. Finally, for any $h \in Y$ we set 
\[
\widehat{K}_h := (K_h)_\pi \quad \in \widehat{\UU}.
\]

\subsection{}\label{comm-diag}
Let $\phat_\pi: \widehat{\UU} \to \Sch(\pi)$ be projection onto the
$\pi$th component.  Let $\UU$ be the quantized enveloping algebra
determined by the given root datum (see \ref{catC}) and for each
$\lambda \in X^+$ let $\Delta(\lambda)$ be the simple $\UU$-module of
highest weight $\lambda$.  According to \cite[Corollary~3.13]{PGSA},
$\Sch(\pi)$ is the quotient of $\UU$ by the ideal consisting of all $u
\in \UU$ annihilating every simple module $\Delta(\lambda)$ such that
$\lambda \in \pi$. Let $p_\pi: \UU \to \Sch(\pi)$ be the corresponding
quotient map, which sends $E_{\pm i}\in \UU$ to $E_{\pm i} \in
\Sch(\pi)$, $K_h \in \UU$ to $K_h \in \Sch(\pi)$ for all $i \in I$, $h
\in Y$. These maps fit into a commutative diagram
\[
\xymatrix{
\widehat{\UU} \ar[dr]^{\phat_{\pi'}} \ar@/_/[dddr]_{\phat_\pi} & & 
\UU \ar[dl]_{p_{\pi'}} \ar@/^/[dddl]^{p_\pi}
\ar@{.>}[ll]_\theta \\ 
& \Sch(\pi') \ar[dd]|{f_{\pi,\pi'}} \\ \\
& \Sch(\pi) 
}
\]
for any finite saturated subsets $\pi, \pi'$ of $X^+$ with $\pi
\subset \pi'$. The universal property of inverse limits guarantees the
existence of a unique algebra map $\theta: \UU \to \widehat{\UU}$
making the diagram commute. 

\begin{thm} \label{thm:Uembeds}
The map $\theta$ gives an algebra embedding of $\UU$ into
$\widehat{\UU}$ sending $E_{\pm i}$ to $\widehat{E}_{\pm i}$ and $K_h$
to $\widehat{K}_h$ for all $i \in I$, $h \in Y$. Hence, the subalgebra
of $\widehat{\UU}$ generated by the $\widehat{E}_{\pm i}$ ($i \in I$),
$\widehat{K}_h$ ($h \in Y$) is isomorphic with $\UU$.
\end{thm}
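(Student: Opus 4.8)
The plan is to establish the two asserted properties of $\theta$ in turn, with the bulk of the work going into injectivity. That $\theta$ carries generators to generators is a formal consequence of the defining commutativity $\phat_\pi \circ \theta = p_\pi$: since $p_\pi(E_{\pm i}) = E_{\pm i}\in\Sch(\pi)$ and $p_\pi(K_h) = K_h\in\Sch(\pi)$ for every $\pi$, and since an element of the inverse limit $\widehat{\UU}$ is determined by its family of components $\phat_\pi$, I would read off that $\theta(E_{\pm i}) = (E_{\pm i})_\pi = \widehat{E}_{\pm i}$ and $\theta(K_h) = (K_h)_\pi = \widehat{K}_h$. The final ``Hence'' clause then follows at once: $\UU$ is generated by the $E_{\pm i}$ and $K_h$, so its image under the algebra map $\theta$ is exactly the subalgebra generated by the $\widehat{E}_{\pm i}$ and $\widehat{K}_h$, and this image is isomorphic to $\UU$ once injectivity is known.

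For injectivity I would first identify the kernel. An element $u\in\UU$ lies in $\ker\theta$ if and only if $\phat_\pi(\theta(u)) = p_\pi(u) = 0$ for every finite saturated $\pi$, an element of the inverse limit being zero precisely when all its projections vanish. By \cite[Corollary~3.13]{PGSA}, $\ker p_\pi$ is the annihilator in $\UU$ of $\bigoplus_{\lambda\in\pi}\Delta(\lambda)$. Since every $\lambda\in X^+$ belongs to some finite saturated set, for instance $X^+[\le\lambda]$, intersecting over all $\pi$ gives
\[
\ker\theta \;=\; \bigcap_\pi \ker p_\pi \;=\; \operatorname{Ann}_\UU\Big(\textstyle\bigoplus_{\lambda\in X^+}\Delta(\lambda)\Big).
\]
Thus injectivity of $\theta$ is equivalent to the faithfulness of the module $\bigoplus_{\lambda\in X^+}\Delta(\lambda)$, i.e. to the assertion that an element of $\UU$ acting as zero on every $\Delta(\lambda)$ must vanish.

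This faithfulness is the heart of the matter and the step I expect to be the main obstacle. I would approach it using the triangular decomposition $\UU\cong\UU^-\otimes\UU^0\otimes\UU^+$ together with the $\Z[I]$-grading of $\UU$. Since each homogeneous component shifts weights by a fixed amount and every $\Delta(\lambda)$ carries a full weight-space decomposition, $u$ annihilates $\bigoplus_\lambda\Delta(\lambda)$ if and only if each graded component of $u$ does; this reduces the problem to a homogeneous $u$. Writing such a $u$ in a PBW basis $\{F_{\mathbf{c}} K_\mu E_{\mathbf{d}}\}$, the idea is that for $\lambda$ sufficiently dominant the top weight spaces of $\Delta(\lambda)$ coincide with those of the corresponding Verma module, on which the PBW monomials act as linearly independent operators; testing $u$ against a suitable family of weight vectors then forces all its coefficients to vanish. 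Rather than reproving this from scratch, I would cite the known faithfulness of the finite-dimensional representations of a quantized enveloping algebra of finite type over $\Q(v)$ (cf.\ \cite{Lusztig}); the only point needing care is to confirm that the objects $\Delta(\lambda)$, $\lambda\in X^+$, already separate the points of $\UU$.
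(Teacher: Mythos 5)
Your proposal is correct and follows essentially the same route as the paper: the paper likewise reduces injectivity of $\theta$ to the statement that an element of $\UU$ annihilating every $\Delta(\lambda)$, $\lambda\in X^+$, must vanish, and then cites Lusztig's faithfulness result (\cite[Prop.~3.5.4]{Lusztig}) rather than reproving it via PBW/Verma arguments. The generator-tracking and the final ``Hence'' clause are handled exactly as in the paper.
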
 

\begin{proof}
Suppose $u \in \UU$ maps to zero under $\theta$. Then $p_\pi(u) = 0$
for every finite saturated subset $\pi$, which means that $u$
annihilates every simple $\UU$-module in the category
$\mathcal{C}$. By \cite[Prop.~3.5.4]{Lusztig} it follows that $u = 0$,
so the kernel of $\theta$ is trivial.  The rest of the assertions of
the theorem are clear.
\end{proof}

\begin{prop}\label{prop:K_h}
In $\widehat{\UU}$ we have the identity $\widehat{K}_h = \sum_{\lambda
  \in X} v^{\bil{h}{\lambda}} \,\widehat{1}_\lambda$ for any $h \in Y$.
\end{prop}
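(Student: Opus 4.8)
The content of this statement is mainly the assertion that the (a priori infinite) sum on the right-hand side defines a genuine element of $\widehat{\UU}$; once that is clarified, the identity is almost immediate. The plan is therefore to give meaning to the expression $\sum_{\lambda \in X} v^{\bil{h}{\lambda}} \widehat{1}_\lambda$ by interpreting it componentwise, and then to compare components with $\widehat{K}_h$.

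First I would observe that a family $(u_\lambda)_{\lambda \in X}$ of elements of $\widehat{\UU}$ may be summed provided it is \emph{locally finite}, in the sense that for each finite saturated $\pi$ only finitely many of the projections $\phat_\pi(u_\lambda) \in \Sch(\pi)$ are nonzero. In that case one defines $\sum_\lambda u_\lambda$ to be the element whose $\pi$th component is the finite sum $\sum_\lambda \phat_\pi(u_\lambda)$; compatibility of these components under the transition maps $f_{\pi,\pi'}$ is automatic, since $f_{\pi,\pi'}$ is additive and $f_{\pi,\pi'}\phat_{\pi'} = \phat_\pi$.

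Next I would check that the family $u_\lambda = v^{\bil{h}{\lambda}} \widehat{1}_\lambda$ is indeed locally finite. By definition $\phat_\pi(\widehat{1}_\lambda) = 1_\lambda$ in $\Sch(\pi)$, and by the convention of \ref{Spi:reformulation} we have $1_\lambda = 0$ in $\Sch(\pi)$ whenever $\lambda \notin W\pi$. Since $\pi$ is finite and $W$ is finite (the root datum being of finite type), the set $W\pi$ is finite, so only finitely many $\phat_\pi(u_\lambda)$ are nonzero. Hence the right-hand side defines an element of $\widehat{\UU}$, whose $\pi$th component is $\sum_{\lambda \in W\pi} v^{\bil{h}{\lambda}} 1_\lambda$.

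Finally I would compare the two sides componentwise. The formula defining $K_h$ in $\Sch(\pi)$ reads exactly $K_h = \sum_{\lambda \in W\pi} v^{\bil{h}{\lambda}} 1_\lambda$, so the $\pi$th component of the right-hand side equals $K_h \in \Sch(\pi)$, which is by definition the component $\phat_\pi(\widehat{K}_h)$ of $\widehat{K}_h$. As two elements of an inverse limit agreeing in every component coincide, the asserted identity follows. The only point requiring genuine care is the first one, namely attaching a precise meaning to the infinite sum; the remaining verifications are routine bookkeeping with the definitions.
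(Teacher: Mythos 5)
Your proof is correct and follows the same route as the paper: apply the projection $\phat_\pi$ to both sides and invoke the defining formula for $K_h \in \Sch(\pi)$ from \ref{def:Spi}. The extra care you take in giving the infinite sum a precise componentwise meaning (local finiteness over each $W\pi$) is a worthwhile elaboration of a point the paper leaves implicit, but it does not change the argument.
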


\begin{proof}
We have only to check that this holds when the projection $\phat_\pi$ is
applied to both sides. This is valid by the definition of $K_h \in
\Sch(\pi)$ given in \ref{def:Spi}.
\end{proof}

\subsection{}
From the preceding result it follows by easy calculations that in
$\widehat{\UU}$ we have the identities

(a) $\widehat{K}_h \widehat{K}_{h'} = \widehat{K}_{h+h'}$, 

(b) $\widehat{K}_0 = 1$, 

(c) $\widehat{K}_{-h} = \widehat{K}_h^{-1}$ 

\noindent
for any $h,h' \in Y$.

\begin{prop}\label{prop:Uhat-rels}
The elements $\widehat{E}_{\pm i}\ (i \in I)$;
$\widehat{1}_\lambda\ (\lambda \in X)$ of $\widehat{\UU}$ satisfy the
relations

(a) $\widehat{1}_\lambda \widehat{1}_{\lambda'} =
\delta_{\lambda,\lambda'} \widehat{1}_\lambda$, \qquad $\sum_{\lambda
  \in X} \widehat{1}_\lambda = 1$;

(b) $\widehat{E}_{\pm i} \widehat{1}_\lambda = \widehat{1}_{\lambda
  \pm \alpha_i} \widehat{E}_{\pm i}$;

(c) $\widehat{E}_{i}\widehat{E}_{-j} - \widehat{E}_{-j}
\widehat{E}_{i} = \delta_{ij} \sum_{\lambda \in X}
\;[\bil{h_i}{\lambda}]_i \, \widehat{1}_\lambda$;

(d) $\displaystyle \sum_{s+s'=1-\bil{h_i}{\alpha_j}} (-1)^{s'}
\divided{\widehat{E}_{\pm i}}{s} \widehat{E}_{\pm j}
\divided{\widehat{E}_{\pm i}}{s'} = 0$\quad  ($i \ne j$).
\end{prop}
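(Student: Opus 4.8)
The plan is to verify each of the relations (a)--(d) coordinatewise, exploiting the fact that $\widehat{\UU}$ is realized as a subalgebra of $\prod_\pi \Sch(\pi)$ with operations defined componentwise. Since the projections $\phat_\pi$ separate points of the inverse limit (an element of $\widehat{\UU}$ is zero precisely when all of its components vanish), an identity in $\widehat{\UU}$ holds if and only if it holds in every $\Sch(\pi)$ after applying $\phat_\pi$. By construction $\phat_\pi(\widehat{E}_{\pm i}) = E_{\pm i}$ and $\phat_\pi(\widehat{1}_\lambda) = 1_\lambda$ in $\Sch(\pi)$, and because $\phat_\pi$ is an algebra homomorphism it also carries the divided power $\divided{\widehat{E}_{\pm i}}{s}$ to $\divided{E_{\pm i}}{s}$. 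Thus for each of the four relations it suffices to observe that the images of the two sides agree in every $\Sch(\pi)$, and this is immediate from the reformulated presentation of $\Sch(\pi)$ in \ref{Spi:reformulation}, whose relations (a)--(d) are literally the present ones with the hats removed.

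The one point requiring care, and the only genuine obstacle, is that the sums $\sum_{\lambda \in X} \widehat{1}_\lambda$ and $\sum_{\lambda \in X} [\bil{h_i}{\lambda}]_i\, \widehat{1}_\lambda$ appearing in (a) and (c) are formally infinite, so one must first explain why they name well-defined elements of $\widehat{\UU}$. I would interpret each such sum coordinatewise: its $\pi$th coordinate is $\sum_{\lambda \in X} 1_\lambda$, respectively $\sum_{\lambda \in X} [\bil{h_i}{\lambda}]_i\, 1_\lambda$, computed in $\Sch(\pi)$. By the convention of \ref{Spi:reformulation}, $1_\lambda = 0$ in $\Sch(\pi)$ for all but the finitely many $\lambda \in W\pi$, so each coordinate sum is in fact finite and hence meaningful. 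Moreover the resulting sequences are compatible under the transition maps, since $f_{\pi,\pi'}(1_\lambda) = 1_\lambda$, so they genuinely define elements of the inverse limit.

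With this understood, relation (a) follows because $\sum_{\lambda \in X} 1_\lambda = 1$ and $1_\lambda 1_{\lambda'} = \delta_{\lambda,\lambda'} 1_\lambda$ hold in each $\Sch(\pi)$; relation (c) follows because the coordinate identity $E_i E_{-j} - E_{-j} E_i = \delta_{ij} \sum_{\lambda \in X} [\bil{h_i}{\lambda}]_i\, 1_\lambda$ holds in each $\Sch(\pi)$; and relations (b) and (d), which involve only finitely many terms, follow directly from the corresponding finite identities in $\Sch(\pi)$. In every case one applies $\phat_\pi$, invokes \ref{Spi:reformulation}, and concludes by the separation property of the projections. I expect no computational difficulty beyond the bookkeeping of the infinite sums described above.
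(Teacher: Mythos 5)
Your proof is correct and follows essentially the same route as the paper, which verifies each relation by applying the projections $\phat_\pi$ and appealing to the presentation of $\Sch(\pi)$ in \ref{Spi:reformulation} (the paper simply says the argument is ``similar to the proof of Proposition \ref{prop:K_h}''). Your additional care in explaining why the infinite sums in (a) and (c) define legitimate elements of the inverse limit is a welcome elaboration of a point the paper leaves implicit.
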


\begin{proof}
The argument is similar to the proof of Proposition \ref{prop:K_h}.
\end{proof}

Note that the relations in the preceding result are the same relations
as in \ref{Spi:reformulation} but in this case the sums in (a), (c)
are infinite, since $1_\lambda \in \widehat{\UU}$ is nonzero for any
$\lambda \in X$.

\begin{rmk} \label{rmk:Urels}
It is clear from Theorem \ref{thm:Uembeds} that the elements
$\widehat{E}_{\pm i}$, $\widehat{K}_h$ of $\widehat{\UU}$ satisfy the
usual defining relations (see \ref{catC}(a)--(d)) for the quantized
enveloping algebra $\UU$.

On the other hand, if one simply starts with $\Sch(\pi)$ defined by
the presentation given in \ref{def:Spi} and forms the inverse limit
$\widehat{\UU}$ without knowledge of $\UU$, defining elements
$\widehat{E}_{\pm i}$, $\widehat{K}_h$ in $\widehat{\UU}$ as we have
done above, then the defining relations \ref{catC}(a)--(d) (with
$\widehat{E}_{\pm i}$, $\widehat{K}_h$ in place of $E_{\pm i}$, $K_h$
respectively) may easily be derived from the defining relations for
$\Sch(\pi)$.  Then $\UU$ could be defined as the subalgebra of
$\widehat{\UU}$ generated by the $\widehat{E}_{\pm i}$ ($i \in I$),
$\widehat{K}_h$ ($h \in Y$).  In fact, this is clear from Proposition
\ref{prop:Uhat-rels} in light of Proposition \ref{prop:K_h}.  For
instance, one has 
\begin{align*} 
\widehat{K}_h \widehat{E}_{\pm i} &= \textstyle\sum_\lambda
v^{\bil{h}{\lambda}} 1_\lambda \widehat{E}_{\pm i} = \sum_\lambda
v^{\bil{h}{\lambda}} \widehat{E}_{\pm i} 1_{\lambda \mp \alpha_i}\\ &=
\widehat{E}_{\pm i} \textstyle\sum_\lambda v^{\bil{h}{\lambda\pm
    \alpha_i}} 1_{\lambda} = v^{\pm\bil{h}{\alpha_i}} \widehat{E}_{\pm
  i} \widehat{K}_h
\end{align*}
 where in the sums $\lambda$ runs over $X$. This proves the analogue
 of relation \ref{catC}(b).  The analogue of relation \ref{catC}(c) is
 proved by a similar calculation, which we leave to the reader.  In
 other words, the defining structure of the quantized enveloping
 algebra $\UU$ is an easy consequence of the defining structure for
 the $\Sch(\pi)$.
\end{rmk}

\begin{rmk}
  The inverse system used here is indexed by the family consisting of
  all finite saturated subsets of $X^+$. One could just as well have
  used the family consisting of all subsets of the form $X^+[\le
  \lambda] = \{\mu \in X^+: \mu \le \lambda\}$, for various $\lambda
  \in X^+$, or even the family of complements of all the $X^+[\ge
  \lambda] = \{\mu \in X^+: \mu \ge \lambda\}$. All these families of
  finite saturated subsets of $X^+$ lead to the same inverse limit
  $\widehat{\UU}$.
\end{rmk}

\section{Relation with the modified form $\dot{\UU}$}\noindent
In this section we explore the relation between the algebra
$\widehat{\UU}$ and Lusztig's modified form $\dot{\UU}$ of $\UU$.  We
show that $\dot{\UU}$ may be identified with a subalgebra of
$\widehat{\UU}$.

\subsection{}\label{defUdot}
The modified form $\dot{\UU}$ is defined (see
\cite[Chapter~23]{Lusztig}) as follows. For $\lambda, \lambda' \in X$
set
\[
  {}_\lambda \UU_{\lambda'} = \UU/ \Big( \sum_{h \in Y} (K_h-
  v^{\bil{h}{\lambda}})\UU + \sum_{h \in Y} \UU(K_h-
  v^{\bil{h}{\lambda'}}) \Big)
\]
regarded as a quotient of vector spaces over $\Q(v)$. Then define
\[
\dot{\UU} := \textstyle \bigoplus_{\lambda, \lambda' \in X}
({}_\lambda \UU_{\lambda'}).
\]
Let $\pi_{\lambda, \lambda'}: \UU \to {}_\lambda \UU_{\lambda'}$ be
the canonical projection. One has a direct sum decomposition $\UU =
\bigoplus_\nu \UU(\nu)$ where $\nu$ runs over the root lattice $\sum
\Z\alpha_i$, and where $\UU(\nu)$ is defined by the requirements
$\UU(\nu)\UU(\nu') \subseteq \UU(\nu+\nu')$, $K_h \in \UU(0)$, $E_{\pm
  i} \in \UU(\pm \alpha_i)$ for all $i \in I$, $h\in Y$. Then
$\dot{\UU}$ inherits a natural associative $\Q(v)$-algebra structure
from that of $\UU$, as follows: for any $\lambda_1, \lambda_2,
\lambda'_1, \lambda'_2 \in X$ and any $t \in \UU(\lambda_1 -
\lambda'_1)$, $s \in \UU(\lambda_2 - \lambda'_2)$, the product
$\pi_{\lambda_1,\lambda'_1}(t) \pi_{\lambda_2, \lambda'_2}(s)$ is
equal to $\pi_{\lambda_1,\lambda'_2}(ts)$ if $\lambda'_1 = \lambda_2$
and is zero otherwise.

For any $\lambda \in X$, set $1_\lambda = \pi_{\lambda,
  \lambda}(1)$. Then the elements $1_\lambda \in \dot{\UU}$ satisfy
the relations
\[
  1_\lambda 1_{\lambda'} = \delta_{\lambda, \lambda'} 1_\lambda 
\]
and we have ${}_\lambda \UU_{\lambda'} = {1}_\lambda \dot{\UU}
1_{\lambda'}$.  The algebra $\dot{\UU}$ may be regarded as a
$\UU$-bimodule by setting, for $t \in \UU(\nu)$, $s\in \UU$, and $t'
\in \UU(\nu')$, the product $t \pi_{\lambda, \lambda'}(s) t' =
\pi_{\lambda+\nu, \lambda'-\nu'}(tst')$ for any $\lambda, \lambda' \in
X$.  It follows that the products $1_\lambda E_{\pm i}$ ($i \in I$,
$\lambda \in X$) are well defined elements of $\dot{\UU}$. In fact
$\dot{\UU}$ is generated by those elements.

\subsection{} \label{UdotSpi}
For a given saturated subset $\pi$ of $X^+$ we shall write $\pi^c$ for
the set theoretic complement $X^+ - \pi$.  In \cite{PGSA} it is shown
that $\Sch(\pi)$ is isomorphic with the quotient algebra
$\dot{\UU}/\dot{\UU}[\pi^c]$ for any finite saturated subset $\pi$ of
$X^+$.  The ideal $\dot{\UU}[\pi^c]$ and corresponding quotient both
appear in \cite[\S29.2]{Lusztig}; the ideal may be characterized as
the set of all elements $u \in \dot{\UU}$ such that $u$ annihilates
every $\Delta(\lambda)$ with $\lambda \in \pi$. We note for future
reference that
\[
  \textstyle \bigcap_{\pi} \dot{\UU}[\pi^c] = (0)
\]
(see \cite[Chapter~29]{Lusztig}).

The proof of \cite[Theorem~4.2]{PGSA} shows that the quotient map
$\pdot_\pi: \dot{\UU} \to \Sch(\pi)$ (with kernel $\dot{\UU}[\pi^c]$)
is defined by sending $E_{\pm i} 1_\lambda \in \dot{\UU}$ to $E_{\pm
  i} 1_\lambda \in \Sch(\pi)$. Clearly the quotient maps $\pdot_\pi$
fit into a commutative diagram
\[
\xymatrix{
\widehat{\UU} \ar[dr]^{\phat_{\pi'}} \ar@/_/[dddr]_{\phat_\pi} & & 
\dot{\UU} \ar[dl]_{\pdot_{\pi'}} \ar@/^/[dddl]^{\pdot_\pi}
\ar@{.>}[ll]_{\dot{\theta}} \\ 
& \Sch(\pi') \ar[dd]|{f_{\pi,\pi'}} \\ \\
& \Sch(\pi) 
}
\]
for any finite saturated subsets $\pi, \pi'$ of $X^+$ with $\pi
\subset \pi'$. Again the universal property of inverse limits
guarantees the existence of a unique algebra map $\dot{\theta}:
\dot{\UU} \to \widehat{\UU}$ making the diagram commute.

We are now prepared to prove the following result.

\begin{thm} \label{thm:dotUembeds}
The map $\dot{\theta}$ is an algebra embedding of $\dot{\UU}$ into
$\widehat{\UU}$ sending $E_{\pm i} 1_\lambda$ to $\widehat{E}_{\pm i}
\widehat{1}_\lambda$ for all $i \in I$, $\lambda \in X$.  Hence, the
subalgebra of $\widehat{\UU}$ generated by the products
$\widehat{E}_{\pm i} \widehat{1}_\lambda$ ($i \in I$, $\lambda \in X$)
is isomorphic with $\dot{\UU}$.
\end{thm}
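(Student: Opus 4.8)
The plan is to prove injectivity of $\dot\theta$ and then read off the two remaining assertions. The structure mirrors the proof of Theorem~\ref{thm:Uembeds}: an element of $\dot\UU$ lies in $\ker\dot\theta$ precisely when it is killed by every projection $\pdot_\pi$, and by the commutative diagram $\phat_\pi\circ\dot\theta=\pdot_\pi$, so $\ker\dot\theta=\bigcap_\pi\ker\pdot_\pi=\bigcap_\pi\dot\UU[\pi^c]$. But the excerpt records in \ref{UdotSpi} the key fact that $\bigcap_\pi\dot\UU[\pi^c]=(0)$, where $\pi$ ranges over all finite saturated subsets of $X^+$. Hence $\ker\dot\theta=(0)$ and $\dot\theta$ is injective.

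First I would make the identification of $\ker\pdot_\pi$ explicit. The map $\pdot_\pi$ is exactly the quotient map $\dot\UU\to\dot\UU/\dot\UU[\pi^c]\cong\Sch(\pi)$, so its kernel is $\dot\UU[\pi^c]$ by definition. Then I would invoke the universal property of the inverse limit $\widehat\UU=\varprojlim\Sch(\pi)$: a point of $\widehat\UU$ is zero iff all its components $\phat_\pi$ vanish, so for $u\in\dot\UU$ one has $\dot\theta(u)=0$ iff $\phat_\pi(\dot\theta(u))=0$ for every $\pi$, i.e.\ iff $\pdot_\pi(u)=0$ for every $\pi$. Combining the last two sentences gives $\ker\dot\theta=\bigcap_\pi\dot\UU[\pi^c]$, and the cited vanishing of this intersection finishes injectivity.

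Next I would verify the formula for $\dot\theta$ on generators. Because $\dot\theta$ is the unique algebra map with $\phat_\pi\circ\dot\theta=\pdot_\pi$, it suffices to compute both sides after applying each $\phat_\pi$. For the element $E_{\pm i}1_\lambda\in\dot\UU$ we have $\phat_\pi(\dot\theta(E_{\pm i}1_\lambda))=\pdot_\pi(E_{\pm i}1_\lambda)=E_{\pm i}1_\lambda\in\Sch(\pi)$, the last equality being the explicit description of $\pdot_\pi$ recorded in \ref{UdotSpi}. On the other hand $\phat_\pi(\widehat E_{\pm i}\widehat1_\lambda)=\phat_\pi(\widehat E_{\pm i})\phat_\pi(\widehat1_\lambda)=E_{\pm i}1_\lambda\in\Sch(\pi)$ since $\widehat E_{\pm i}=(E_{\pm i})_\pi$ and $\widehat1_\lambda=(1_\lambda)_\pi$ by construction. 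As these agree in every component $\Sch(\pi)$, they agree in $\widehat\UU$, giving $\dot\theta(E_{\pm i}1_\lambda)=\widehat E_{\pm i}\widehat1_\lambda$.

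Finally, the last sentence follows formally: since $\dot\UU$ is generated as an algebra by the elements $E_{\pm i}1_\lambda$ (as noted at the end of \ref{defUdot}), an injective algebra homomorphism carries $\dot\UU$ isomorphically onto the subalgebra of $\widehat\UU$ generated by their images $\widehat E_{\pm i}\widehat1_\lambda$. I expect no real obstacle in this argument; the entire content has been arranged into place by the preceding subsections, and the one substantive input---the triviality of $\bigcap_\pi\dot\UU[\pi^c]$---is quoted from \cite[Chapter~29]{Lusztig}. The only point demanding any care is bookkeeping: confirming that the indexing family of finite saturated $\pi$ used to define $\widehat\UU$ is the same family over which the intersection is taken to be zero, which it is.
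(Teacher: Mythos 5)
Your proposal is correct and follows essentially the same route as the paper: injectivity is obtained by identifying $\ker\dot{\theta}$ with $\bigcap_\pi \dot{\UU}[\pi^c]$ via the commuting relations $\phat_\pi \circ \dot{\theta} = \pdot_\pi$, and then invoking the triviality of that intersection recorded in \ref{UdotSpi}. The paper's own proof is just a terser version of your argument (it dismisses the computation on generators and the final assertion as ``clear''), so there is nothing to add.
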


\begin{proof}
Suppose $\dot{\theta}(u) = 0$ for $u \in \dot{\UU}$. Then $\pdot_\pi(u)
= 0$ for each finite saturated subset $\pi$ of $X^+$. Hence $u \in
\bigcap_{\pi} \dot{\UU}[\pi^c]$; whence $u = 0$. Thus the kernel of
$\dot{\theta}$ is trivial. The rest of the claims are clear. 
\end{proof}

\subsection{}
Henceforth we identify $\dot{\UU}$ with the subalgebra of
$\widehat{\UU}$ generated by all $\widehat{E}_{\pm i}
\widehat{1}_\lambda$. Note that the elements $\widehat{E}_{\pm i}$ of
$\widehat{\UU}$ are not elements of $\dot{\UU}$ since their expression
in terms of the generators of $\dot{\UU}$ involves infinite sums.

\subsection{} \label{completion}
For convenience, choose a total ordering $\pi_1, \pi_2, \dots$ on the
finite saturated sets $\pi$ which is compatible with the partial order
given by set inclusion, in the following sense: $i \le j$ implies that
$\pi_i \subseteq \pi_j$.  Then the
completion of $\dot{\UU}$ with respect to the descending sequence of
ideals
\[
  \dot{\UU} \supseteq \dot{\UU}[\pi_1^c] \supseteq \dot{\UU}[\pi_2^c]
  \supseteq \cdots
\]
is isomorphic with $\widehat{\UU}$.

As in \cite{RM-Green}, we put a topology on the ring $\dot{\UU}$ by
letting the collection $\{ \dot{\UU}[\pi^c] \}$, as $\pi$ varies over
the finite saturated subsets of $X^+$, define a neighborhood base of
$0$.  Then $\widehat{\UU}$ may be regarded as the set of equivalence
classes of Cauchy sequences $(x_n)_{n=1}^\infty$ of elements of
$\dot{\UU}$ under usual Cauchy equivalence. Here a sequence $(x_n)$ is
Cauchy if for each neighborhood $\dot{\UU}[\pi^c]$ there exists some
positive integer $N(\pi)$ such that
\[
x_m - x_n \in \dot{\UU}[\pi^c] \quad\text{for all $m,n \ge N(\pi)$},
\]
and given sequences $(x_n)$, $(y_n)$ are Cauchy equivalent if $x_n -
y_n \to 0$ as $n \to \infty$.  The proof is standard (see
e.g.\ \cite[Chapter 10]{AM}). Given a Cauchy sequence $(x_n)$ its image
in $\Sch(\pi)$ is eventually constant, say $a_\pi$. The resulting
sequence $(a_\pi) \in \prod_\pi \Sch(\pi)$ satisfies $a_\pi =
f_{\pi,\pi'}(a_{\pi'})$ for any $\pi \subset \pi'$, so $(a_\pi) \in
\widehat{\UU}$. On the other hand, given any $(a_\pi) \in
\widehat{\UU}$ we can define a corresponding Cauchy sequence by
setting $x_n$ equal to any element of the coset
$\pdot_{\pi_n}(a_{\pi_n}) \in \dot{\UU}/\dot{\UU}[\pi_n^c]$, where
$\pi = \pi_n$.

Thus, $\widehat{\UU}$ is a complete topological algebra. It is
Hausdorff, thanks to the triviality of the intersection of the
elements of the neighborhood base of $0$.

\subsection{}
We say that a basis $B$ of $\dot{\UU}$ is {\em coherent} if the set of
nonzero elements of $\pdot_\pi(B)$ is a basis of $\Sch(\pi)$, for each
finite saturated $\pi \subset X^+$.

Assume that $B$ is any such basis. Write $B[\pi]$ for the set of
nonzero elements of $\pdot_\pi(B)$. The following result is a
consequence of \cite[Corollary 2.2.5]{RM-Green}.

\begin{prop}
Given any coherent basis $B$ of $\dot{\UU}$, the completion
$\widehat{\UU}$ may be identified with the algebra of all formal
infinite linear combinations of elements of $B$. 
\end{prop}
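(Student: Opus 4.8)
The plan is to unwind the definitions and reduce the statement to the two defining properties of a coherent basis, together with the description of $\widehat{\UU}$ as an inverse limit already established in \S\ref{completion}. First I would fix a coherent basis $B$ of $\dot{\UU}$ and recall that, by definition, for each finite saturated $\pi$ the set $B[\pi]$ of nonzero elements of $\pdot_\pi(B)$ is a basis of $\Sch(\pi)$. The natural map to construct is the one sending a formal (possibly infinite) linear combination $\sum_{b \in B} c_b\, b$ (with $c_b \in \Q(v)$) to the element of $\widehat{\UU}$ whose $\pi$th component is the finite sum $\sum_{b} c_b\, \pdot_\pi(b)$, the point being that $\pdot_\pi(b) = 0$ for all but finitely many $b \in B$ so that each component is a well-defined element of the finite dimensional algebra $\Sch(\pi)$.

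Next I would verify that this assignment lands in $\widehat{\UU}$ and is a bijection. Compatibility with the inverse system, i.e.\ $f_{\pi,\pi'}(a_{\pi'}) = a_\pi$ for $\pi \subseteq \pi'$, is immediate from $f_{\pi,\pi'} \circ \pdot_{\pi'} = \pdot_\pi$, which holds since both maps agree on the generators $E_{\pm i} 1_\lambda$. Injectivity follows because the coefficient $c_b$ can be recovered: for any $b$ choose $\pi$ large enough that $\pdot_\pi(b) \ne 0$, and then $c_b$ is the coordinate of $a_\pi$ along the basis vector $\pdot_\pi(b) \in B[\pi]$; if all components vanish then every $c_b = 0$. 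For surjectivity, given $(a_\pi)_\pi \in \widehat{\UU}$ I would expand each $a_\pi$ in the basis $B[\pi]$ and use the compatibility $f_{\pi,\pi'}(a_{\pi'}) = a_\pi$ to check that the resulting coefficients are consistent across all $\pi$, hence define a single family $(c_b)_{b \in B}$ whose image is the given sequence.

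The crux, and the step I expect to be the main obstacle, is exactly this consistency of coefficients under refinement: I must show that if $b, b' \in B$ satisfy $\pdot_\pi(b) = \pdot_\pi(b') \ne 0$ then the corresponding coefficients are unambiguously determined, and more importantly that the coefficient of $b$ read off from $a_\pi$ does not depend on which $\pi$ (large enough to keep $\pdot_\pi(b)$ nonzero) is used. This is where the structure of a coherent basis is essential: the maps $f_{\pi,\pi'}$ send basis elements of $\Sch(\pi')$ either to basis elements of $\Sch(\pi)$ or to zero, matching the behaviour of $\pdot_\pi$ and $\pdot_{\pi'}$ on $B$, so the coordinate functionals are genuinely compatible. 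Since the cited \cite[Corollary~2.2.5]{RM-Green} supplies precisely this coherence of the quotient maps on the basis, I would invoke it to conclude that the coefficient assignment is well defined, completing the identification of $\widehat{\UU}$ with the space of formal infinite $\Q(v)$-linear combinations of $B$. Finally I would remark that, under this identification, the algebra multiplication on $\widehat{\UU}$ is the evident one on formal sums, inherited componentwise from the $\Sch(\pi)$, so the identification is one of algebras and not merely of vector spaces.
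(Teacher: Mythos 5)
Your proposal is correct and follows essentially the same route as the paper: both define the map from formal sums $\sum_b a_b\,b$ to compatible sequences $(a_\pi)$ with $a_\pi = \sum_{b\in B[\pi]} a_b\,b$, and both establish surjectivity by expanding each component in the basis $B[\pi]$ and checking that the coefficient of a given $b$ is independent of $\pi$ by passing to a common finite saturated refinement $\pi''\supseteq \pi\cup\pi'$. Your additional remarks on injectivity and on the multiplicative structure are sensible elaborations of points the paper leaves implicit, but they do not change the argument.
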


\begin{proof}
Any formal sum of the form $a = \sum_{b \in B} a_b\,b$ (for $a_b \in
\Q(v)$) determines an element $a_\pi = \sum_{b \in B[\pi]} a_b\,b$ of
$\Sch(\pi)$. Clearly, the sequence $(a_\pi)$ is an element of
$\widehat{\UU}$. 

We must show that every element of $\widehat{\UU}$ is expressible in
such a form. Let $a = (a_\pi)$ be an element of $\widehat{\UU}$. Each
$a_\pi \in \Sch(\pi)$ may be written in the form $a_\pi = \sum_{b \in
B[\pi]} a_b\,b$ where $a_b \in \Q(v)$. Moreover, the coefficient
$a_b$ of any $b \in B$ will always be the same value, for any $\pi'$
such that $b \in B[\pi']$. To see this, let $\pi''$ be any finite
saturated subset of $X^+$ containing both $\pi$ and $\pi'$ (such must
exist) and consider the projections $f_{\pi, \pi'}$ and $f_{\pi,
\pi''}$. Since $\cup_\pi B[\pi] = B$ this shows that $a$ determines a
well-defined infinite sum $\sum_{b \in B} a_b\, b$.
\end{proof}

\subsection{} \label{can-basis}
In \cite[Chapter~25]{Lusztig} it is proved that the canonical basis
can be lifted from the positive part of $\UU$ to a canonical basis
$\dot{\B}$ of $\dot{\UU}$. (This was a primary motivation for the
introduction of $\dot{\UU}$.)  Moreover, $\dot{\B}$ is coherent with
respect to the inverse system $\{ \Sch(\pi) \}$; see
\cite[\S29.2.3]{Lusztig}. Thus it follows from the preceding
proposition that elements of $\widehat{\UU}$ may be regarded as formal
infinite linear combinations of $\dot{\B}$.

\begin{rmk}
It is easy to see that $\widehat{\UU}$ is a procellular algebra in the
sense of R.M.~Green \cite{RM-Green}. This is a consequence of
Lusztig's refined Peter-Weyl theorem \cite[Theorem~29.3.3]{Lusztig},
which implies that $\dot{\B}$ is a {\em cellular basis} of
$\dot{\UU}$.  (See \cite{GL} for the definition of cellular basis.)
\end{rmk}

\begin{lem}\label{lem:gen}
The algebra $\widehat{\UU}$ is topologically generated by the elements
$\widehat{E}_{\pm i}$ ($i \in I$), $\widehat{1}_\lambda$ ($\lambda \in
X$) in the sense that every element of $\widehat{\UU}$ is expressible
as a formal (possibly infinite) linear combination of finite products
of those elements.
\end{lem}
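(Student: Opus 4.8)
The plan is to combine two facts already in hand: the description of $\widehat{\UU}$ as the space of formal infinite linear combinations of the coherent canonical basis $\dot{\B}$ (from \S\ref{can-basis}), and the fact that $\dot{\UU}$ is generated as a $\Q(v)$-algebra by the products $E_{\pm i}1_\lambda$, which $\dot{\theta}$ carries to $\widehat{E}_{\pm i}\widehat{1}_\lambda$ (Theorem \ref{thm:dotUembeds}). The point is that each individual basis element $b \in \dot{\B}$, being an element of $\dot{\UU}$, is already a \emph{finite} linear combination of finite products of the generators $\widehat{E}_{\pm i}$ and $\widehat{1}_\lambda$; feeding these finite expressions into the formal sum over $\dot{\B}$ then exhibits an arbitrary element of $\widehat{\UU}$ in the desired form.

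In detail, first I would take an arbitrary $a \in \widehat{\UU}$ and invoke \S\ref{can-basis} to write $a = \sum_{b \in \dot{\B}} a_b\, b$ as a formal infinite $\Q(v)$-linear combination of the canonical basis. Next, for each $b$ occurring with $a_b \neq 0$, I would use the generation statement of \ref{defUdot} together with Theorem \ref{thm:dotUembeds}: since $b$ lies in the subalgebra $\dot{\theta}(\dot{\UU})$ generated by the products $\widehat{E}_{\pm i}\widehat{1}_\lambda$, we may write $b = \sum_k \gamma_{b,k}\, Q_{b,k}$, a finite sum in which each $Q_{b,k}$ is a finite product of the elements $\widehat{E}_{\pm i}\widehat{1}_\lambda$, hence a finite product of the $\widehat{E}_{\pm i}$ and $\widehat{1}_\lambda$ themselves. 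Substituting these finite expressions back into $a = \sum_b a_b\, b$ then displays $a$ as a formal (possibly infinite) linear combination of finite products of the asserted generators, as required.

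The one point that needs care --- and the only real obstacle --- is to check that this substitution produces a genuine element of the completion equal to $a$, rather than a meaningless rearrangement. I would verify this by projecting via $\phat_\pi$ to each $\Sch(\pi)$ and appealing to coherence. For a fixed finite saturated $\pi$, only the finitely many $b \in \dot{\B}[\pi]$ have nonzero image $\pdot_\pi(b)$, so $\phat_\pi(a) = a_\pi = \sum_{b \in \dot{\B}[\pi]} a_b\, \pdot_\pi(b)$ is a finite sum; applying $\phat_\pi$ to the substituted expression yields the same finite element of $\Sch(\pi)$, since $\phat_\pi$ is an algebra homomorphism and commutes with the finite products $Q_{b,k}$. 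As this agreement holds for every $\pi$ and $\widehat{\UU}$ is the inverse limit (and is Hausdorff by \S\ref{completion}), the substituted formal sum converges to $a$. Organizing the infinite sum by the index $b \in \dot{\B}$, rather than by the individual products, is exactly what keeps each projection finite and makes the convergence transparent.
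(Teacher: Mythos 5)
Your argument is correct and is essentially identical to the paper's proof: write an arbitrary element as a formal linear combination of the coherent canonical basis $\dot{\B}$ via \ref{can-basis}, then express each basis element as a finite linear combination of finite products of the generators using the generation of $\dot{\UU}$ by the $\widehat{E}_{\pm i}\widehat{1}_\lambda$. The extra verification in your last paragraph (checking the substitution componentwise via the projections $\phat_\pi$) is a careful addition the paper leaves implicit, but it does not change the route.
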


\begin{proof}
By \ref{can-basis} every element of $\widehat{\UU}$ is a formal linear
combination of elements of $\dot{\B}$. But elements of $\dot{\B}$ are
themselves expressible as finite linear combinations of finite
products of the elements $\widehat{E}_{\pm i}$ ($i \in I$),
$\widehat{1}_\lambda$ ($\lambda \in X$), since $\dot{\UU}$ is
generated by elements of the form $\widehat{E}_{\pm i}
\widehat{1}_\lambda$ for various $i\in I$, $\lambda \in X$ (see
\cite[Chapter 23]{Lusztig}).
\end{proof}

\begin{thm}
The algebra $\widehat{\UU}$ is the associative algebra with 1 given by
the generators $\widehat{E}_{\pm i}$ ($i \in I$),
$\widehat{1}_\lambda$ ($\lambda \in X$) with the relations (a)--(d) of
Proposition~\ref{prop:Uhat-rels}, in the following sense:
\[
\widehat{\UU} \simeq \Q(v)\langle\langle \widehat{E}_{\pm i},
\widehat{1}_\lambda \rangle\rangle / J
\]
where $\Q(v)\langle\langle \widehat{E}_{\pm i}, \widehat{1}_\lambda
\rangle\rangle$ is the free complete algebra on the generators
$\widehat{E}_{\pm i}, \widehat{1}_\lambda$ (consisting of all formal
linear combinations of finite products of generators) and $J$ is the
ideal generated by relations \ref{prop:Uhat-rels}(a)--(d).
\end{thm}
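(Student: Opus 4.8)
The plan is to exhibit the asserted isomorphism as a continuous map built from the universal property of the free complete algebra, and then to check that it is bijective by comparing both sides with the inverse system $\{\Sch(\pi)\}$ one level at a time. Write $\mathcal{F} = \Q(v)\langle\langle \widehat{E}_{\pm i}, \widehat{1}_\lambda \rangle\rangle$, and interpret $J$ as the \emph{closed} two-sided ideal generated by the elements of $\mathcal{F}$ corresponding to the relations \ref{prop:Uhat-rels}(a)--(d). By Proposition~\ref{prop:Uhat-rels} the generators $\widehat{E}_{\pm i}, \widehat{1}_\lambda$ of $\widehat{\UU}$ satisfy exactly these relations, and by \ref{completion} the algebra $\widehat{\UU}$ is a complete topological algebra; the defining universal property of $\mathcal{F}$ (in the procellular sense of \cite{RM-Green}) therefore yields a continuous homomorphism $\Phi\colon \mathcal{F} \to \widehat{\UU}$ fixing the generators. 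Since relations \ref{prop:Uhat-rels}(a)--(d) hold in $\widehat{\UU}$, we have $J \subseteq \ker \Phi$, so $\Phi$ descends to a continuous map $\overline{\Phi}\colon \mathcal{F}/J \to \widehat{\UU}$, which is the candidate isomorphism.

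Surjectivity of $\overline{\Phi}$ is immediate from Lemma~\ref{lem:gen}: every element of $\widehat{\UU}$ is by definition a formal, possibly infinite, linear combination of finite products of the $\widehat{E}_{\pm i}$ and $\widehat{1}_\lambda$, hence lies in the image of $\Phi$.

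The substance is injectivity, and I would obtain it by descending to each finite level. For a finite saturated $\pi$, let $I_\pi \subseteq \mathcal{F}$ denote the closed two-sided ideal generated by the $\widehat{1}_\lambda$ with $\lambda \notin W\pi$. The key claim is that $\mathcal{F}/(J+I_\pi) \cong \Sch(\pi)$. Indeed, imposing $\widehat{1}_\lambda = 0$ for $\lambda \notin W\pi$ on top of relations (a)--(d) truncates the sums in (a) and (c) to finite sums over $W\pi$ and produces precisely the presentation of $\Sch(\pi)$ recorded in \ref{Spi:reformulation}; moreover the resulting quotient is already discrete and finite dimensional, since (as noted in \ref{def:Spi}) the relations force the $\widehat{E}_{\pm i}$ to be nilpotent and leave only finitely many nonzero orthogonal idempotents, so the completion adds nothing. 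Under this identification the factored map $\mathcal{F}/(J+I_\pi) \to \Sch(\pi)$ is the isomorphism induced by $\phat_\pi \circ \Phi$. Consequently, if $\Phi(x)=0$ then $\phat_\pi(\Phi(x))=0$, whence $x \in J + I_\pi$, for every $\pi$; that is, $\ker\Phi \subseteq \bigcap_\pi (J+I_\pi)$. Since the maps $\mathcal{F}/(J+I_{\pi'}) \twoheadrightarrow \mathcal{F}/(J+I_\pi)$ for $\pi \subseteq \pi'$ match the $f_{\pi,\pi'}$, these finite-level isomorphisms also assemble into $\varprojlim_\pi \mathcal{F}/(J+I_\pi) \cong \varprojlim_\pi \Sch(\pi) = \widehat{\UU}$.

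It remains to prove the separation statement $\bigcap_\pi (J+I_\pi) = J$, which forces $\ker\overline{\Phi}=0$ and completes the argument; this is the step I expect to be the main obstacle. It amounts to showing that $\mathcal{F}/J$ is Hausdorff for the topology with neighborhood base $\{(J+I_\pi)/J\}$, and, together with completeness of $\mathcal{F}/J$ (which follows from completeness of $\mathcal{F}$ once $J$ is known to be closed), it upgrades $\overline{\Phi}$ to the isomorphism $\mathcal{F}/J \cong \varprojlim_\pi \mathcal{F}/(J+I_\pi) \cong \widehat{\UU}$. The delicate point is purely one of topological bookkeeping on the free complete algebra: one must verify that the nilpotency and idempotent relations, which enter only through $J$, are correctly reflected in the topology, so that for instance $\widehat{E}_i^{\,n} \to 0$ and an expression such as $\sum_n \widehat{1}_\lambda^{\,n}$ collapses under relation (a) rather than producing a spurious element of $\ker\Phi$ outside $J$. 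This separation runs exactly parallel to the Hausdorff-and-complete discussion for $\widehat{\UU}$ in \ref{completion}, whose triviality of the intersection of the neighborhood base of $0$ is the analogue we must reproduce inside $\mathcal{F}/J$.
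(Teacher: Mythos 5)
Your argument follows essentially the same route as the paper's proof, so there is no wrong turn here; the useful comparison concerns where each version locates the work. The paper sets $\P = \Q(v)\langle\langle \widehat{E}_{\pm i}, \widehat{1}_\lambda \rangle\rangle/J$, observes (via \ref{Spi:reformulation}) that each $\Sch(\pi)$ is by definition the quotient of $\P$ by the ideal generated by the $\widehat{1}_\lambda$ with $\lambda \notin W\pi$ --- this is exactly your identification $\mathcal{F}/(J+I_\pi) \simeq \Sch(\pi)$ --- obtains $\Psi: \P \to \widehat{\UU}$ from the universal property of the inverse limit applied to the compatible family $q_\pi$ (the same map as your $\overline{\Phi}$, just constructed from the other side), and proves surjectivity from Lemma~\ref{lem:gen} together with Proposition~\ref{prop:Uhat-rels} exactly as you do. For injectivity the paper states only that ``the intersection of the kernels of the various $q_\pi$ is trivial,'' which is precisely your separation statement $\bigcap_\pi (J+I_\pi) = J$: the step you flag as the main obstacle is the step the paper dispatches in one sentence, so you have not missed an idea that the paper supplies --- you have only been more candid that this is where the remaining content sits. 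Two of your additions are genuinely worth keeping: the remark that $\mathcal{F}/(J+I_\pi)$ is already discrete and finite dimensional (nilpotency of the $\widehat{E}_{\pm i}$, finitely many surviving orthogonal idempotents), so that no completion issues arise at the finite levels; and the explicit insistence that $J$ be taken as a \emph{closed} ideal, without which the quotient $\mathcal{F}/J$ need not be complete. If you want to close the separation step rather than assert it, the natural route is the one suggested by \ref{completion} and \ref{can-basis}: use relations (a)--(b) to rewrite any formal combination of words in $\mathcal{F}/J$ as a formal sum of terms supported on the idempotents $\widehat{1}_\mu(-)\widehat{1}_\lambda$ and compare against the coherent-basis description of $\widehat{\UU}$, so that an element killed by every $q_\pi$ has all its ``coordinates'' equal to zero. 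That is more than the paper itself records, but it is what a fully self-contained proof requires.
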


\begin{proof}
Let $\P$ be the algebra $\Q(v)\langle\langle \widehat{E}_{\pm i},
\widehat{1}_\lambda \rangle\rangle / J$. Notice (see
\ref{Spi:reformulation}) that by definition $\Sch(\pi)$ is the
quotient of $\P$ by the ideal generated by all $\widehat{1}_\lambda$
with $\lambda \notin W\pi$. Thus we have surjective quotient maps
\[
q_\pi: \P \to \Sch(\pi)  \qquad \text{($\pi$ finite saturated)}
\]
such that $f_{\pi,\pi'} q_{\pi'} = q_\pi$ whenever $\pi \subset
\pi'$. These maps fit into a commutative diagram similar to the one
appearing in \ref{comm-diag}, and by the universal property of inverse
limits there is an algebra map $\Psi: \P \to \widehat{\UU}$ sending
$\widehat{E}_{\pm i}$ to $\widehat{E}_{\pm i}$, and
$\widehat{1}_\lambda$ to $\widehat{1}_\lambda$.

The map $\Psi$ is injective since the intersection of the kernels of
the various $q_\pi$ is trivial. On the other hand, by the preceding
lemma combined with Proposition~\ref{prop:Uhat-rels} $\Psi$ must also
be surjective, since the generators of $\widehat{\UU}$ satisfy the
defining relations of $\P$.
\end{proof}

\begin{rmk}
The topology on $\widehat{\UU}$ is induced from the topology on
$\dot{\UU}$. The basic neighborhoods of $0$ are of the form
$\widehat{\UU}[\pi^c]$ for the various finite saturated subsets $\pi$
of $X^+$, where $\widehat{\UU}[\pi^c]$ is the set of all formal
$\Q(v)$-linear combinations of elements of $\dot{\B}[\pi^c]$, where
the notation $\dot{\B}[\pi^c]$ is as defined in
\cite[\S29.2.3]{Lusztig}.
\end{rmk}

\section{Integral forms}\noindent
We will now extend the results obtained thus far to integral forms
(over the ring $\A = \Z[v,v^{-1}]$ of Laurent polynomials in $v$).

\subsection{} \label{intSpi}
One has an integral form $_\A\Sch(\pi)$ in $\Sch(\pi)$. It is by
definition the $\A$-subalgebra of $\Sch(\pi)$ generated by all
$\divided{E_{\pm i}}{m}$ ($i \in I$, $m \in \N$) and $1_\lambda$
($\lambda \in W\pi$).  There is an algebra isomorphism
\[
\Sch(\pi) \simeq  \Q(v) \otimes_\A (_\A\Sch(\pi)) 
\]
which carries $\divided{E_{\pm i}}{m}$ to $1 \otimes \divided{E_{\pm
    i}}{m}$ and $1_\lambda$ to $1 \otimes 1_\lambda$.  Note that the
elements $K_h$ ($h \in Y$) in $\Sch(\pi)$ in fact belong to the
subalgebra $_\A \Sch(\pi)$.

It is easy to see (see \cite[\S5.1]{PGSA}) that $_\A\Sch(\pi)$ is
isomorphic with a quotient of the Lusztig $\A$-form $_\A \UU$ of
$\UU$, which is by definition (\cite[\S3.1.13]{Lusztig}) the
$\A$-subalgebra of $\UU$ generated by all $\divided{E_{\pm i}}{m}$ ($i
\in I$, $m \ge 0$) and $K_h$ ($h \in Y$).  The quotient map $_\A\UU
\to {_\A\Sch(\pi)}$ sends $\divided{E_{\pm i}}{m}$ to
$\divided{E_{\pm i}}{m}$ and $K_h$ to $K_h$ ($i \in I$, $m \ge 0$, $h
\in Y$). Hence it is just the restriction of $p_\pi$ to $_\A\UU$; we
denote it also by $p_\pi$.

Clearly the integral form on $\Sch(\pi)$ is compatible with the maps
$f_{\pi,\pi'}$ in the sense that the restriction of $f_{\pi,\pi'}$ to
$_\A\Sch(\pi')$ is a surjective map of $\A$-algebras from
$_\A\Sch(\pi')$ onto $_\A\Sch(\pi)$. Recall the identification
\[
 \widehat{\UU} = \{ (a_\pi)_\pi \in \prod_\pi \Sch(\pi) \mid
 f_{\pi, \pi'}(a_{\pi'}) = a_\pi \text{ whenever } \pi \subset \pi' \}.
\]
Inside this algebra we have an $\A$-subalgebra 
\[
 _\A \widehat{\UU} = \{ (a_\pi)_\pi \in \prod_\pi (_\A \Sch(\pi)) \mid
 f_{\pi, \pi'}(a_{\pi'}) = a_\pi \text{ whenever } \pi \subset \pi' \}.
\]
It is clear that $_\A \widehat{\UU}$ is isomorphic with $\varprojlim
(_\A \Sch(\pi))$ (an isomorphism of $\A$-algebras). 

\begin{thm}
  The map $\theta$ (see \ref{comm-diag}) restricts to an algebra
  embedding (also denoted $\theta$) of $_\A\UU$ into
  $_\A\widehat{\UU}$ sending $\divided{E_{\pm i}}{m} \to
  \divided{\widehat{E}_{\pm i}}{m}$ and $K_h$ to $\widehat{K}_h$ for
  all $i \in I$, $m \ge 0$, $h \in Y$. Hence, $_\A\UU$ is isomorphic
  with the $\A$-subalgebra of $_\A\widehat{\UU}$ generated by all
  $\divided{\widehat{E}_{\pm i}}{m}$, $\widehat{K}_h$ ($i \in I$, $m
  \ge 0$, $h \in Y$).
\end{thm}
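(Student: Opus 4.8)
The plan is to exploit the fact that $\theta$ has already been shown to be an embedding of $\UU$ into $\widehat{\UU}$ in Theorem~\ref{thm:Uembeds}, so that injectivity of any restriction is automatic. The genuine content is therefore twofold: to verify that $\theta$ actually carries the integral form $_\A\UU$ into $_\A\widehat{\UU}$, and then to identify the image as the stated $\A$-subalgebra. I would treat injectivity as free from the outset, since the restriction of an injective map to a subalgebra remains injective.

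First I would recall that $\theta$ is characterized by the relations $\phat_\pi \circ \theta = p_\pi$ for every finite saturated $\pi$, so that $\theta(u) = (p_\pi(u))_\pi$ for each $u \in \UU$. For $u \in {_\A\UU}$, the observation recorded in \ref{intSpi}---that $p_\pi$ restricts to a quotient map $_\A\UU \to {_\A\Sch(\pi)}$ sending $\divided{E_{\pm i}}{m}$ to $\divided{E_{\pm i}}{m}$ and $K_h$ to $K_h$---shows that every component $p_\pi(u)$ lies in $_\A\Sch(\pi)$. Since the sequence $\theta(u)$ automatically satisfies the compatibility relations $f_{\pi,\pi'}(p_{\pi'}(u)) = p_\pi(u)$ (being an element of $\widehat{\UU}$), it follows that $\theta(u) \in {_\A\widehat{\UU}}$. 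Thus $\theta$ restricts to an injective $\A$-algebra map $_\A\UU \to {_\A\widehat{\UU}}$.

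Next I would check that the generators go where claimed. Because multiplication in $\widehat{\UU}$ is defined componentwise, one has $\widehat{E}_{\pm i}^{\,m} = (E_{\pm i}^m)_\pi$, and dividing by the scalar $[m]^!_i$ gives $\divided{\widehat{E}_{\pm i}}{m} = (\divided{E_{\pm i}}{m})_\pi$; combined with $p_\pi(\divided{E_{\pm i}}{m}) = \divided{E_{\pm i}}{m}$ this yields $\theta(\divided{E_{\pm i}}{m}) = \divided{\widehat{E}_{\pm i}}{m}$, while $\theta(K_h) = \widehat{K}_h$ by Theorem~\ref{thm:Uembeds}. Since $_\A\UU$ is by definition generated as an $\A$-algebra by the $\divided{E_{\pm i}}{m}$ and the $K_h$, and $\theta$ is an algebra homomorphism, its image is precisely the $\A$-subalgebra of $_\A\widehat{\UU}$ generated by the $\divided{\widehat{E}_{\pm i}}{m}$ and the $\widehat{K}_h$; injectivity then furnishes the desired isomorphism. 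I expect no serious obstacle here. The only point demanding a moment's care is that the divided powers involve division by $[m]^!_i$, so one must confirm they remain inside the \emph{integral} form $_\A\widehat{\UU}$; but this is immediate from the componentwise description, since each $\divided{E_{\pm i}}{m}$ lies in $_\A\Sch(\pi)$ by the very definition of the integral form in \ref{intSpi}.
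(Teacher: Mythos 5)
Your proof is correct and follows essentially the same route as the paper: both arguments rest on the observation from \ref{intSpi} that $p_\pi$ restricts to a quotient map $_\A\UU \to {_\A\Sch(\pi)}$, deduce injectivity for free from Theorem~\ref{thm:Uembeds}, and identify the image by tracking generators. The only cosmetic difference is that the paper constructs the integral map via the universal property of the inverse limit and then recognizes it as the restriction of $\theta$, whereas you restrict $\theta$ directly and verify componentwise that its image lands in $_\A\widehat{\UU}$; the two formulations are logically equivalent.
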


\begin{proof}
If $\phat_\pi$ denotes projection to the $\pi$th component as before,
we have a commutative diagram of $\A$-algebras similar to the
commutative diagram considered in \ref{comm-diag}, where the algebras
are replaced by their integral forms and each map is just the
restriction to the integral form of the corresponding map in the
diagram given in \ref{comm-diag}.  The existence of the map $\theta$
is guaranteed by the universal property of inverse limits, and by
considering its effect on generators we see that it must in fact be
the restriction to $_\A\UU$ of the map $\theta$ given already in
\ref{comm-diag}. Since $\theta$ is a restriction of an injective map,
it is itself injective.
\end{proof}

\subsection{}
Now consider the $\A$-subalgebra $_\A\dot{\UU}$ of $\dot{\UU}$
generated by all products of the form $\divided{E_{\pm
    i}}{m}1_\lambda$ ($i \in I$, $m \ge 0$, $\lambda \in X$). This
integral form of $\dot{\UU}$ was studied in \cite[\S23.2]{Lusztig}.

The restriction of the quotient map $\dot{p}_\pi$ (see \ref{UdotSpi})
to $_\A \dot{\UU}$ gives a surjective map (also denoted by
$\dot{p}_\pi$) from $_\A \dot{\UU}$ to $_\A \Sch(\pi)$. This is clear
from the definition of $_\A \Sch(\pi)$ given in \ref{intSpi}.  There
is a commutative diagram similar to the diagram considered in
\ref{UdotSpi}, in which all the algebras are replaced by their
integral forms, and the maps are just the restrictions of the maps
considered in the diagram \ref{UdotSpi}.  As before, the universal
property of inverse limits guarantees the existence of a unique
algebra map $\dot{\theta}: {}_\A\dot{\UU} \to {}_\A\widehat{\UU}$
making the diagram commute.

\begin{thm} \label{thm:AdotUembeds}
The map $\dot{\theta}$ is an algebra embedding of $_\A\dot{\UU}$ into
$_\A\widehat{\UU}$ sending $\divided{E_{\pm i}}{m} 1_\lambda$ to
$\divided{\widehat{E}_{\pm i}}{m} \widehat{1}_\lambda$ for all $i \in
I$, $m\ge 0$, $\lambda \in X$.  Hence, the $\A$-subalgebra of
$_\A\widehat{\UU}$ generated by the $\divided{\widehat{E}_{\pm i}}{m}
\widehat{1}_\lambda$ ($i \in I$, $m\ge 0$, $\lambda \in X$) is
isomorphic with $_\A\dot{\UU}$.
\end{thm}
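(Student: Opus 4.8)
The plan is to mirror exactly the proof strategy used for Theorem~\ref{thm:dotUembeds} in the generic case, since the integral statement here is the $\A$-form analogue. The essential point is that $\dot{\theta}$ is already known to be injective on the ambient generic algebra $\dot{\UU}$, and $_\A\dot{\UU}$ is an $\A$-subalgebra of $\dot{\UU}$; so the map $\dot{\theta}: {}_\A\dot{\UU} \to {}_\A\widehat{\UU}$ constructed here is nothing but the restriction of the generic $\dot{\theta}$. A restriction of an injective map is injective, which immediately gives triviality of the kernel. This is precisely the device already used in the proof of the preceding integral theorem (for $_\A\UU$), where injectivity was inherited from the generic map rather than reproved.

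First I would note that the commutative diagram preceding the statement already produces the unique $\A$-algebra map $\dot{\theta}: {}_\A\dot{\UU} \to {}_\A\widehat{\UU}$ via the universal property of inverse limits, and that by tracking its effect on the generators $\divided{E_{\pm i}}{m} 1_\lambda$ one verifies it sends these to $\divided{\widehat{E}_{\pm i}}{m} \widehat{1}_\lambda$ as claimed. Then I would observe that this map agrees with the generic embedding $\dot{\theta}: \dot{\UU} \to \widehat{\UU}$ of Theorem~\ref{thm:dotUembeds} on the $\A$-subalgebra $_\A\dot{\UU}$, because both are compatible with the projections $\pdot_\pi$ and agree on generators. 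Injectivity of the generic map then forces injectivity of its restriction. The final ``Hence'' clause, identifying $_\A\dot{\UU}$ with the $\A$-subalgebra of $_\A\widehat{\UU}$ generated by the $\divided{\widehat{E}_{\pm i}}{m} \widehat{1}_\lambda$, is then immediate, since $_\A\dot{\UU}$ is by definition generated by the $\divided{E_{\pm i}}{m} 1_\lambda$ and $\dot{\theta}$ carries these onto the asserted generators.

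The step requiring the most care — and the only place where one might worry — is confirming that the restriction of $\dot{\theta}$ genuinely lands in $_\A\widehat{\UU}$ and not merely in $\widehat{\UU}$, i.e.\ that each component $\pdot_\pi(\divided{E_{\pm i}}{m} 1_\lambda)$ lies in the integral form $_\A\Sch(\pi)$. This is exactly the content of the remark preceding the statement: the restriction of $\pdot_\pi$ to $_\A\dot{\UU}$ is surjective onto $_\A\Sch(\pi)$, which is clear from the definition of $_\A\Sch(\pi)$ in \ref{intSpi} as the $\A$-subalgebra generated by the $\divided{E_{\pm i}}{m}$ and $1_\lambda$. Given this, the image sequences automatically satisfy the compatibility condition $f_{\pi,\pi'}(a_{\pi'}) = a_\pi$ and have integral components, so they lie in $_\A\widehat{\UU} = \varprojlim (_\A\Sch(\pi))$ as required. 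Once this integrality of the image is in hand, no genuine obstacle remains; the argument is formally identical to the generic case and to the immediately preceding integral theorem for $_\A\UU$.
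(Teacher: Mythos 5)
Your proposal is correct and follows essentially the same route as the paper: the paper's own proof consists precisely of the observation that $\dot{\theta}$ on ${}_\A\dot{\UU}$ is the restriction of the injective map $\dot{\theta}$ from Theorem~\ref{thm:dotUembeds}, hence injective. Your additional check that the image lands in ${}_\A\widehat{\UU}$ is a detail the paper leaves implicit (it follows from the surjectivity of $\pdot_\pi$ restricted to ${}_\A\dot{\UU}$ onto ${}_\A\Sch(\pi)$, as you note), but it does not change the argument.
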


\begin{proof}
The map $\dot{\theta}$ is the restriction to $_\A\dot{\UU}$ of the
injective map $\dot{\theta}$ considered in the proof of
\ref{thm:dotUembeds}, thus injective.
\end{proof}

\subsection{}
As before, fix a total ordering $\pi_1, \pi_2, \dots$ on the finite
saturated sets $\pi$ compatible with the partial order given by set
inclusion.  Then the completion of ${}_\A\dot{\UU}$ with respect to
the descending sequence of ideals
\[
  {}_\A\dot{\UU} \supseteq {}_\A\dot{\UU}[\pi_1^c] \supseteq
  {}_\A\dot{\UU}[\pi_2^c] \supseteq \cdots
\]
is isomorphic with ${}_\A\widehat{\UU}$.

One may put a topology on $_\A\dot{\UU}$ exactly as in
\ref{completion}, by letting the collection $\{ _\A\dot{\UU}[\pi^c]
\}$, as $\pi$ varies over the finite saturated subsets of $X^+$,
define a neighborhood base of $0$. Here $_\A\dot{\UU}[\pi^c] = {}_\A
\dot{\UU} \cap \dot{\UU}[\pi^c]$ is the kernel of the surjection
$\dot{p}_\pi: {}_\A \dot{\UU} \to {}_\A \Sch(\pi)$.

Since the canonical basis $\dot{\B}$ is an $\A$-basis of $_\A
\dot{\UU}$, it follows that elements of $_\A \widehat{\UU}$ may be
regarded as formal (possibly infinite) $\A$-linear combinations of
$\dot{\B}$.  Then the subalgebra $_\A \dot{\UU}$ may be regarded as
the set of all finite $\A$-linear combinations of $\dot{\B}$.

\section{Specialization} \noindent
By specializing to a commutative ring $R$ (via the ring homomorphism
$\A \to R$ determined by $v \to \xi$ for an invertible $\xi \in R$)
one obtains generalized $q$-Schur algebras $_R\Sch(\pi)$ over $R$ for
each saturated $\pi$. These algebras form an inverse system; we study
the corresponding inverse limit $_R\widehat{\UU}$.

\subsection{} \label{specialization}
Let $R$ be a given commutative ring with $1$, and $\xi \in R$ a
given invertible element. Regard $R$ as an $\A$-algebra via the ring
homomorphism $\A \to R$ such that $v^n \to \xi^n$ for all $n \in
\Z$. Consider the $R$-algebras
\begin{equation}
  _R \UU = R \otimes_\A (_\A \UU), \qquad _R \dot{\UU} = R \otimes_\A
  (_\A \dot{\UU}).
\end{equation}
In the literature, these algebras are sometimes denoted by alternative
notations such as $\UU_\xi$, $\dot{\UU}_\xi$. We note that one has
isomorphisms ${}_{\Q(v)} \UU \simeq \UU$, ${}_{\Q(v)} \dot{\UU} \simeq
\dot{\UU}$ given by the obvious maps.

For a finite saturated subset $\pi$ of $X^+$ set $_R \Sch(\pi) = R
\otimes_\A (_\A \Sch(\pi))$, a generalized $q$-Schur algebra
specialized at $v \to \xi$.  Note that ${}_{\Q(v)} \Sch(\pi) \simeq
\Sch(\pi)$. The elements $\divided{E_{\pm i}}{m}$, $1_\lambda$, $K_h
\in {}_\A\Sch(\pi)$ give rise to corresponding elements
\[
1\otimes \divided{E_{\pm i}}{m},\  1\otimes 1_\lambda,\ 
1\otimes K_h \in {}_R\Sch(\pi) 
\]
for $i\in I$, $m \ge 0$, $\lambda\in X$, $h \in Y$.  These elements of
${}_R\Sch(\pi)$ will be respectively denoted again by
$\divided{E_{\pm i}}{m}$, $1_\lambda$, $K_h$, since the intended
meaning will be clear from the context.

Since tensoring is right exact, we have a
surjective quotient map
\begin{equation}\label{eq:1dotp}
   1 \otimes \dot{p}_\pi: {}_R \dot{\UU} \to {_R \Sch(\pi)},
\end{equation}
arising (by tensoring with the identity map on $R$) from the
corresponding quotient map $\dot{p}_\pi: {}_\A\dot{\UU} \to
{}_\A\Sch(\pi)$ over $\A$.  

\begin{lem}\label{lem:ker1dotp}
The kernel of the map $1\otimes \dot{p}_\pi$ is $_R\dot{\UU}[\pi^c] =
R\otimes_\A ({}_\A \dot{\UU}[\pi^c])$.
\end{lem}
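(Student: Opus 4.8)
The plan is to deduce the lemma from the right exactness of $R \otimes_\A -$ together with the fact that ${}_\A\dot{\UU}[\pi^c]$ is a direct summand of ${}_\A\dot{\UU}$ as an $\A$-module. First I would record the short exact sequence of $\A$-modules
\[
0 \to {}_\A\dot{\UU}[\pi^c] \to {}_\A\dot{\UU} \xrightarrow{\dot{p}_\pi} {}_\A\Sch(\pi) \to 0,
\]
which is just the statement that the surjection $\dot{p}_\pi$ over $\A$ (see \ref{UdotSpi} and §4) has kernel ${}_\A\dot{\UU}[\pi^c]$. Applying $R \otimes_\A -$ and using only right exactness, the kernel of $1 \otimes \dot{p}_\pi$ equals the image of the induced map
\[
\iota\colon R \otimes_\A ({}_\A\dot{\UU}[\pi^c]) \to {}_R\dot{\UU}.
\]
So everything reduces to showing that $\iota$ is injective; once that is known, we may identify $R \otimes_\A ({}_\A\dot{\UU}[\pi^c])$ with its image, and this image is by definition ${}_R\dot{\UU}[\pi^c]$.

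The key structural input is the canonical basis $\dot{\B}$. By \ref{can-basis} (and \cite[\S29.2.3]{Lusztig} for the integral statement) $\dot{\B}$ is an $\A$-basis of ${}_\A\dot{\UU}$ which is coherent, so it splits as a disjoint union $\dot{\B} = \dot{\B}[\pi] \sqcup \dot{\B}[\pi^c]$, where the nonzero images $\dot{p}_\pi(\dot{\B}[\pi])$ form an $\A$-basis of ${}_\A\Sch(\pi)$, while $\dot{\B}[\pi^c]$ is an $\A$-basis of the kernel ${}_\A\dot{\UU}[\pi^c]$. Consequently, as $\A$-modules one has the internal direct sum decomposition
\[
{}_\A\dot{\UU} = \A\dot{\B}[\pi^c] \oplus \A\dot{\B}[\pi],
\]
so the inclusion ${}_\A\dot{\UU}[\pi^c] \hookrightarrow {}_\A\dot{\UU}$ is a \emph{split} injection of $\A$-modules (the projection onto the first summand, along $\A\dot{\B}[\pi]$, provides a retraction).

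A split injection of modules stays injective after applying any base-change functor $R \otimes_\A -$, since the retraction base-changes to a retraction. Hence $\iota$ is injective, and combining this with the first paragraph gives
\[
\ker(1 \otimes \dot{p}_\pi) = R \otimes_\A ({}_\A\dot{\UU}[\pi^c]) = {}_R\dot{\UU}[\pi^c],
\]
as claimed. I expect the only real obstacle to be exactly this injectivity of $\iota$: because tensoring is merely right exact, without extra input the kernel could a priori be strictly larger than the naive candidate. The coherence of the canonical basis removes the difficulty cleanly, as it exhibits ${}_\A\Sch(\pi)$ as a free (hence flat) $\A$-module and thereby splits the defining sequence over $\A$. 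Equivalently, one could argue that $\Tor_1^\A(R, {}_\A\Sch(\pi)) = 0$ by freeness and invoke the long exact $\Tor$ sequence, but the splitting argument is more direct and avoids homological machinery.
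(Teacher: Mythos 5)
Your proof is correct and follows essentially the same route as the paper: the paper's (terser) argument likewise rests on the fact that ${}_\A\dot{\UU}[\pi^c]$ is the cell ideal spanned by the canonical basis elements in $\dot{\B}[\pi^c]$, so that the defining short exact sequence over $\A$ splits and the kernel is preserved under $R\otimes_\A -$. Your write-up merely makes explicit the splitting and base-change steps that the paper leaves implicit (with a harmless notational reuse of $\dot{\B}[\pi]$ for the complementary subset of $\dot{\B}$ rather than its image in $\Sch(\pi)$).
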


\begin{proof}
  This is a consequence of the fact that the canonical basis of $_\A
  \dot{\UU}$ is a ``cellular'' basis (in the sense of \cite{GL}) and
  the kernel $_\A \dot{\UU}[\pi^c]$ of the map $\dot{p}_\pi:
  {}_\A\dot{\UU} \to {}_\A\Sch(\pi)$ is the cell ideal spanned by the
  canonical basis elements in $\dot{\B}[\pi^c]$. See \cite[\S5]{PGSA}
  for details.
\end{proof}

\subsection{}\label{Uhatdef}
Whenever $\pi \subset \pi'$ (for finite saturated subsets of $X^+$)
there is a surjective algebra map $1\otimes f_{\pi,\pi'}: {}_R
\Sch(\pi') \to {}_R \Sch(\pi)$ obtained from the map $f_{\pi, \pi'}:
    {}_\A \Sch(\pi') \to {}_\A \Sch(\pi)$ by tensoring with the
    identity map on $R$.  Thus we have an inverse system
\begin{equation}\label{eq:RIS}
  \{ _R \Sch(\pi);\ \ 1\otimes f_{\pi, \pi'} \}.
\end{equation}
We define ${}_R \widehat{\UU}$ to be the inverse limit $\varprojlim
{}_R \Sch(\pi)$ of this inverse system. In ${}_R\widehat{\UU}$ we have
elements
\begin{equation}
  \divided{\widehat{E}_{\pm i}}{m}:= (\divided{E_{\pm
      i}}{m})_\pi,\ \widehat{1}_\lambda:=(1_\lambda)_\pi,\ \widehat{K}_h:=
  (K_h)_\pi
\end{equation}
of ${}_R\widehat{\UU}$ defined by the corresponding constant
sequences, for $i\in I$, $m \ge 0$, $\lambda\in X$, $h \in
Y$. Actually, the sequence defining $1_\lambda$ is not necessarily
constant, but it is eventually constant.

\begin{thm}
There is an embedding ${}_R \dot{\UU} \to {}_R\widehat{\UU}$ of
$R$-algebras sending $\divided{E_{\pm i}}{m} 1_\lambda$ to
$\divided{\widehat{E}_{\pm i}}{m} \widehat{1}_\lambda \in
{}_R\widehat{\UU}$ for all $i\in I, m\ge 0$, $\lambda \in X$. Thus
${}_R\dot{\UU}$ may be identified with the $R$-subalgebra of
${}_R\widehat{\UU}$ generated by all $\divided{\widehat{E}_{\pm i}}{m}
\widehat{1}_\lambda$ ($i\in I, m\ge 0$, $\lambda \in X$).
\end{thm}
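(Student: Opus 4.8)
The plan is to follow the pattern of \ref{UdotSpi} and Theorem~\ref{thm:AdotUembeds}: build $\dot\theta$ from the universal property of the inverse limit, and then prove injectivity by examining the kernel through the cellular structure of the canonical basis. First I would note that the surjections $1\otimes\dot{p}_\pi: {}_R\dot{\UU}\to{}_R\Sch(\pi)$ of \eqref{eq:1dotp} are compatible with the transition maps $1\otimes f_{\pi,\pi'}$ of the inverse system \eqref{eq:RIS}, since all of these are obtained by tensoring the corresponding $\A$-level maps---already known to be compatible in \ref{UdotSpi}---with the identity on $R$. Hence they fit into a commutative diagram exactly like the one in \ref{UdotSpi}, and the universal property of ${}_R\widehat{\UU}=\varprojlim{}_R\Sch(\pi)$ yields a unique $R$-algebra map $\dot\theta:{}_R\dot{\UU}\to{}_R\widehat{\UU}$ with $\phat_\pi\circ\dot\theta=1\otimes\dot{p}_\pi$ for every $\pi$ (here $\phat_\pi$ is projection to the $\pi$th component). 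Reading off the effect on generators, $\dot\theta$ sends $\divided{E_{\pm i}}{m}1_\lambda$ to $\divided{\widehat{E}_{\pm i}}{m}\widehat{1}_\lambda$, as required.

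The real content is the injectivity of $\dot\theta$. Its kernel is $\bigcap_\pi\ker(1\otimes\dot{p}_\pi)=\bigcap_\pi{}_R\dot{\UU}[\pi^c]$, the description of each individual kernel being Lemma~\ref{lem:ker1dotp}. Here the main obstacle appears: unlike in the proofs of Theorems~\ref{thm:dotUembeds} and \ref{thm:AdotUembeds}, I cannot simply invoke $\bigcap_\pi\dot{\UU}[\pi^c]=(0)$, because tensoring with $R$ need not commute with an infinite intersection of submodules. This is precisely the place where the cellularity of the canonical basis is indispensable.

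To resolve it I would argue directly on $\dot{\B}$. Since $\dot{\B}$ is an $\A$-basis of ${}_\A\dot{\UU}$ (see \ref{can-basis}), it specializes to an $R$-basis of ${}_R\dot{\UU}=R\otimes_\A({}_\A\dot{\UU})$, so ${}_R\dot{\UU}$ is $R$-free on $\dot{\B}$. By Lemma~\ref{lem:ker1dotp} and the cellular description of ${}_\A\dot{\UU}[\pi^c]$ recalled in its proof, ${}_R\dot{\UU}[\pi^c]$ is exactly the $R$-span of $\dot{\B}[\pi^c]$; as $\dot{\B}=\dot{\B}[\pi]\sqcup\dot{\B}[\pi^c]$, this span is the direct $R$-module summand complementary to the span of $\dot{\B}[\pi]$. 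Therefore an element $x=\sum_{b\in\dot{\B}}x_b\,b$ of ${}_R\dot{\UU}$ lies in ${}_R\dot{\UU}[\pi^c]$ if and only if $x_b=0$ for all $b\in\dot{\B}[\pi]$. If $x$ lies in the intersection over all $\pi$, then $x_b=0$ for all $b\in\bigcup_\pi\dot{\B}[\pi]=\dot{\B}$, forcing $x=0$. Hence $\dot\theta$ is injective, and its image is the $R$-subalgebra generated by the elements $\divided{\widehat{E}_{\pm i}}{m}\widehat{1}_\lambda$, as claimed.
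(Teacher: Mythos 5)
Your proof is correct and follows essentially the same route as the paper: construct ${}_R\dot\theta$ via the universal property of the inverse limit, identify its kernel as $\bigcap_\pi {}_R\dot{\UU}[\pi^c]$ using Lemma~\ref{lem:ker1dotp}, and kill that intersection using the cellularity of the canonical basis. The only difference is cosmetic: where the paper cites Lusztig's Chapter~29 (via the ideals ${}_R\dot{\UU}[\ge\lambda]$ and ``known properties of the canonical basis''), you spell out the same fact directly by observing that ${}_R\dot{\UU}$ is $R$-free on $\dot{\B}$ and each ${}_R\dot{\UU}[\pi^c]$ is the span of $\dot{\B}[\pi^c]$ --- a welcome explicitness, and you correctly flag why the na\"ive transfer of $\bigcap_\pi\dot{\UU}[\pi^c]=(0)$ from the generic case would not suffice.
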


\begin{proof}
 Consider the commutative diagram of
$R$-algebra maps
\[
\xymatrix{
_R\widehat{\UU} \ar[dr]^{1\otimes\phat_{\pi'}} 
\ar@/_/[dddr]_{1\otimes\phat_\pi} & & 
_R\dot{\UU} \ar[dl]_{1\otimes \dot{p}_{\pi'}} 
\ar@/^/[dddl]^{1\otimes \dot{p}_\pi}
\ar@{.>}[ll]_{_R\dot{\theta}} \\ 
& _R\Sch(\pi') \ar[dd]|{1\otimes f_{\pi,\pi'}} \\ \\
& _R\Sch(\pi) 
}
\]
for any finite saturated $\pi \subset \pi'$.  The universal property
of inverse limits guarantees the existence of a unique algebra map
$_R\dot{\theta}: {}_R\dot{\UU} \to {}_R\widehat{\UU}$ making the
diagram commute.  This map has the desired properties.

By Lemma \ref{lem:ker1dotp}, the kernel of $_R\dot{\theta}$ is the
intersection over $\pi$ of all ${}_R\dot{\UU}[\pi^c]$.  From Lusztig's
results \cite[Chapter 29]{Lusztig} it follows that this intersection
is the zero ideal $(0)$.  Indeed, the intersection is contained in the
intersection of all ${}_R\dot{\UU}[\ge \lambda]$ as $\lambda$ runs
through all dominant weights, and by known properties of the canonical
basis the latter intersection is $(0)$.  This proves the injectivity
of $_R\dot{\theta}$, as desired.
\end{proof}

\subsection{}
As before, fix a total ordering $\pi_1, \pi_2, \dots$ on the finite
saturated sets $\pi$ which is compatible with the partial order given
by set inclusion.  Then the completion of ${}_R\dot{\UU}$ with
respect to the descending sequence of ideals
\[
  {}_R\dot{\UU} \supseteq {}_R\dot{\UU}[\pi_1^c] \supseteq
  {}_R\dot{\UU}[\pi_2^c] \supseteq \cdots
\]
is isomorphic with ${}_R\widehat{\UU}$.

One may put a topology on $_R\dot{\UU}$, by letting the collection $\{
_R \dot{\UU}[\pi^c] \}$, as $\pi$ varies over the finite saturated
subsets of $X^+$, define a neighborhood base of $0$.  Elements of $_R
\widehat{\UU}$ may be regarded as formal (possibly infinite)
$R$-linear combinations of $\dot{\B}$.  Then the subalgebra $_R
\dot{\UU}$ may be regarded as the set of all finite $R$-linear
combinations of $\dot{\B}$.  The topology on $_R \widehat{\UU}$ is
induced from the topology on $_R \dot{\UU}$; i.e., the basic
neighborhoods of $0$ are of the form $_R\widehat{\UU}[\pi^c]$ for the
various finite saturated sets $\pi$, where $_R\widehat{\UU}[\pi^c]$ is
the set of all formal $R$-linear combinations of elements of
$\dot{\B}[\pi^c]$.

\begin{rmk}
It is not immediately clear how the completion $_R\widehat{\UU}$ is
related to $R \otimes_\A ({}_\A\widehat{\UU})$. We note that there is
a well defined homomorphism of $R$-algebras
\begin{equation}
  R \otimes_\A ({}_\A\widehat{\UU}) \to {}_R\widehat{\UU}
\end{equation}
sending $1 \otimes (a_\pi)$ to $(1 \otimes a_\pi)$, where $(a_\pi) \in
\prod_\pi ({}_\A\Sch(\pi))$ satisfies the condition
$f_{\pi',\pi}(a_{\pi'}) = a_\pi$ whenever $\pi \subset \pi'$. It seems
unlikely that this map is an isomorphism.
\end{rmk}

\begin{rmk}
Does the algebra ${}_R\UU$ embed in ${}_R\widehat{\UU}$? There is an
$R$-algebra homomorphism
\begin{equation}
  {}_R \UU \to {}_R\widehat{\UU}
\end{equation} 
defined on generators by sending $\divided{E_{\pm i}}{m}$ to
$\divided{\widehat{E}_{\pm i}}{m}$ (for $i \in I$, $m \ge 0$), and
sending $K_h$ to $\widehat{K}_h$ (for $h \in Y$). But it is not clear
that this map is injective.

One would like to prove the injectivity of this map, since then one
may identify ${}_R\UU$ with a subalgebra of the completion
${}_R\widehat{\UU}$.  

We sketch a possible approach to this question.
To prove the injectivity, one needs to show that the intersection of
the kernels of the quotient maps ${}_R\UU \to {}_R\Sch(\pi)$ is
$(0)$. The quotient map ${}_R\UU \to {}_R\Sch(\pi)$ is the map
$1\otimes p_\pi$ obtained from the quotient map $p_\pi: {}_\A\UU \to
{}_\A\Sch(\pi)$ defined in \ref{intSpi}, by tensoring with the
identity map on $R$.

Suppose some $u \in {}_R\UU$ belongs to the intersection of the
kernels of the quotient maps ${}_R\UU \to {}_R\Sch(\pi)$, as $\pi$
varies over the finite saturated subsets of $X^+$. Then one can show
that $u$ acts as zero on any finite-dimensional ${}_R\UU$-module,
since such a module will be a well defined module for ${}_R\Sch(\pi)$
for some (large enough) saturated set $\pi$. Now if \cite[Proposition
  5.11]{Jantzen:LQG} can be generalized to our setting, we would be
able to conclude that $u=0$, and the desired injectivity statement
would be established. However, such a generalization is not available
in the published literature on quantum groups, to the author's
knowledge.  
\end{rmk}


\end{document}